\renewcommand{\geq}{\geqslant}
\renewcommand{\leq}{\leqslant}
\renewcommand{\epsilon}{\varepsilon}
\definecolor{darkgreen}{rgb}{0,0.4,0}
\definecolor{MyDarkBlue}{rgb}{0,0.08,0.85}
\definecolor{BrickRed}{rgb}{0.8,0.08,0}
\numberwithin{equation}{section}
\colorlet{darkgreen}{green!50!black}
\newtheoremstyle{sans}{\parskip}{\parskip}{\itshape}
                       {0pt}{\bfseries\sffamily}{.}{ }{}
\newtheoremstyle{sansplain}{\parskip}{\parskip}{}
                       {0pt}{\bfseries\sffamily}{.}{ }{}
\theoremstyle{sans}
\newtheorem{prop}{Proposition}[section]
\newtheorem{coro}[prop]{Corollary}
\newtheorem{thm}[prop]{Theorem}
\newtheorem{lem}[prop]{Lemma}
\theoremstyle{sansplain}
\newtheorem{rem}[prop]{Remark}
\numberwithin{equation}{section}
\numberwithin{figure}{section}
\def\egaldef{\stackrel{\textnormal{\tiny def}}{=}}
\DeclareMathOperator{\sgn}{sgn}
\newcommand\Cc{\mathbb{C}}
\let\phi=\varphi
\let\epsilon=\varepsilon
\def\DD{\displaystyle}
\def\H{\mathcal{H}}
\def\K{\mathcal{K}}
\def\D{\mathcal{D}}
\def\U{\mathcal{U}}
\begin{document}

\title[Reflected Brownian motion in a non-convex wedge]{On the stationary distribution of reflected Brownian motion in a non-convex wedge}

\date{\today}

\author{Guy Fayolle}

\address{Inria Paris, 2 rue Simone Iff, CS 42112, 75589 Paris Cedex 12 France \newline\indent
Inria Paris-Saclay, 1 rue Honor\'e d'Estienne d’Orves, 91120 Palaiseau, France} 
\email{guy.fayolle@inria.fr}

\author{Sandro Franceschi}
       
        \address{T\'el\'ecom SudParis, Institut Polytechnique de Paris, 19 place Marguerite Perey, 91120 Palaiseau, France
        } \email{sandro.franceschi@telecom-sudparis.eu}

\author{Kilian Raschel}
       
        \address{Universit\'e d'Angers, Laboratoire Angevin de Recherche en Math\'ematiques, CNRS, 2~Boulevard Lavoisier, 49000 Angers, France} \email{raschel@math.cnrs.fr}
        \thanks{This project has received funding from two organizations: \\ (1) The European Research Council (ERC) under the European Union's Horizon 2020 research and innovation programme under the Grant Agreement No.\ 759702. \\ (2) The ANR RESYST (ANR-22-CE40-0002)}
        
\keywords{Obliquely reflected Brownian motion in a wedge; non-convex cone; stationary distribution; Laplace transform; boundary value problem}

\subjclass[2010]{Primary 60J65, 60E10; Secondary 60H05}

\begin{abstract}
We study the stationary reflected Brownian motion in a non-convex wedge, which, compared to its convex analogue model, has been much rarely analyzed in the probabilistic literature. We prove that its stationary distribution can be found by solving a two dimensional vector boundary value problem (BVP) on a single curve for the associated Laplace transforms. The reduction to this kind of vector BVP seems to be new in the literature. As a matter of comparison, one single boundary condition is sufficient in the convex case. When the parameters of the model (drift, reflection angles and covariance matrix) are symmetric with respect to the bisector line of the cone, the model is reducible to a standard reflected Brownian motion in a convex cone. Finally, we construct a one-parameter family of distributions, which surprisingly provides, for any wedge (convex or not), one particular example of stationary distribution of a reflected Brownian motion. 
\end{abstract}

\maketitle

\section*{To the memory of Vadim Malyshev}  On September 30, 2022, at the age of 85, Vadim Aleksandrovich Malyshev, Editor-in-Chief of the journal~MPRF, died suddenly.  Vadim was an outstanding Russian scientist in the field of probability and mathematical physics. His memory will always remain in the hearts and minds of his colleagues. I [Guy Fayolle]  mourn the loss of the one who was my friend for 37 years.

\section{Introduction} \label{sec:RBM}

\subsection{Context and motivations} Since the introduction of the reflected Brownian motion in the eighties \cite{HaRe-81a,HaRe-81b,VaWi-85,Wi-85}, the mathematical community has shown a constant interest in this topic. Typical questions deal with  the recurrence of the process, the absorption at the corner of the wedge, the existence and computation of stationary distributions... We refer for more details to the introduction of \cite{FrRa-19}. 

Generally speaking, an obliquely reflected Brownian motion in a two-dimensional wedge of opening angle $\beta\in (0, 2\pi)$ is defined by its drift $\mu\in\mathbb R^2$ and two reflection angles $(\delta,\varepsilon)\in(0, \pi)^2$, see Figures \ref{fig:Franceschi_condition2}, \ref{fig:three_quarter_plane} and \ref{fig:symmetric_case_change_variable} for a few examples. The covariance matrix is taken to be the identity. A suitable linear transform allows to reduce the whole range of parameter angles $\beta\in (0, 2\pi)$ to only three cases: the quarter plane (when $\beta\in (0,\pi)$), the three-quarter plane (when $\beta\in (\pi,2\pi)$) and the limiting half-plane case $\beta=\pi$. Doing so, the covariance matrix is nolonger the identity but instead has the general form \eqref{eq:covariance_matrix}. However, by a clear convexity argument, a linear transform cannot be used to transform, for instance, the three-quarter plane into a quarter plane.

While the early articles \cite{VaWi-85,Wi-85} most dealt with the general case $\beta\in (0, 2\pi)$ (see also the more recent article \cite{KaRa-14}), the subcase of convex cones $\beta\in (0, \pi]$ has attracted much more attention \cite{HaRe-81a,HaRe-81b,Fo-82,Fo-84,BaFa-87,DiMo-09,DaMi-11,DaMi-13,franceschi_tuttes_2016,FrRa-19,BoElFrHaRa-21}; we have identified at least three reasons for that. First, one initial motivation was to approximate queueing systems in a dense traffic regime \cite{Ha-78}, which are typically obtained from random walks in the (convex) quarter plane. Second, the Laplace transform turns out to be a very useful tool in these problems; to make this function converge we need to have a convex cone. Finally, because there are already several parameters defining reflected Brownian motion (drift, reflection angles and opening angle), we feel that non-convex cones have sometimes been taken away, in order to reduce the number of cases to consider: for instance, regarding transience and recurrence criteria, only the convex case has been established in \cite{HoRo-93}, while close arguments should also cover the non-convex case.

\begin{quote}
\textit{In this article, our main objective is the study of recurrent reflected Brownian motion in the non-convex case $\beta\in (\pi,2\pi)$: we shall introduce complex analysis techniques to characterize the Laplace transform of the stationary distribution.}
\end{quote}

Let us present five motivations to the present work. Our first goal is to complete the literature and to show how, in this more complicated non-convex setting, one can solve the problem of finding the stationary distribution. Our techniques could also be applied to the transient case, for example to analyse Green functions or absorption probabilities (see \cite{Fr-21,ErFr-21} for the convex case); however, we do not tackle these problems here.

Our second motivation is provided by the discrete framework of random walks (or queueing networks). Indeed, in the same way as in the quarter plane, reflected Brownian motion has been introduced to study scaling limits of large queueing networks (see Figure~\ref{fig:intro_scaling-1}), a Brownian model in a non-convex cone could approximate discrete random walks on a wedge having obtuse angle (see Figure \ref{fig:intro_scaling-2} for a concrete example). Such random walks have an independent interest and have already been studied in a number of cases: see \cite{BM-16,RaTr-19,EP-20} in the combinatorial literature and \cite{Tr-19,Mu-19} for more probability inclined works. 

\begin{figure}[hbtp]
\centering
\includegraphics[scale=0.5]{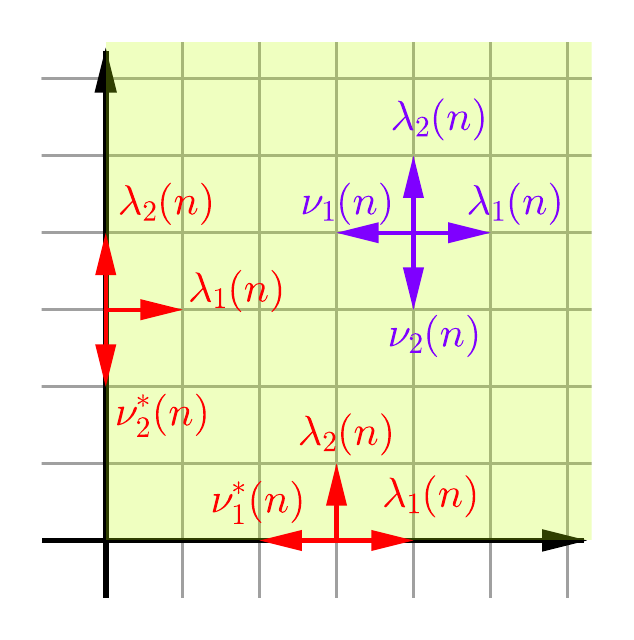}
\includegraphics[scale=0.5]{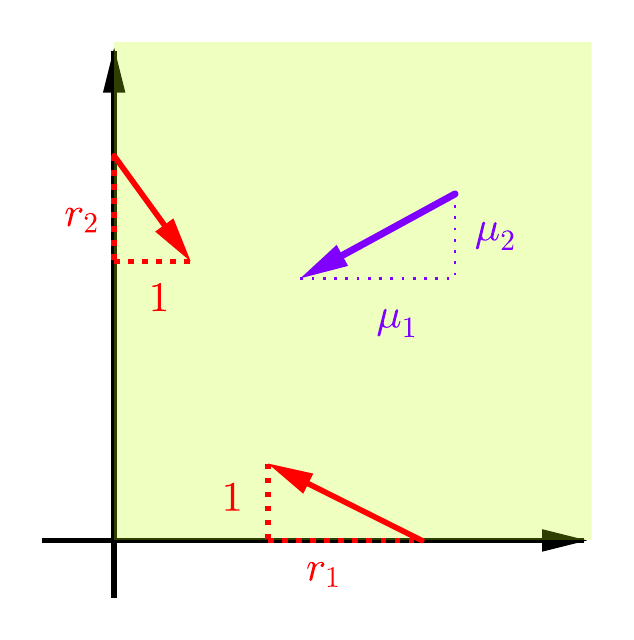}
\caption{Scaling limit of some queueing systems towards reflected Brownian motion. Left picture: transition rates of a random walk (two coupled processors). Taking $\lambda_{i}(n),\nu_i(n)\to\frac{1}{2}$, $\sqrt{n}(\lambda_i-\nu_i)\to \mu_i$ and $\nu_i^*(n)\to\frac{r_i+1}{2}$, the discrete process converges to the reflected Brownian motion with parameters described as on the right picture (with identity covariance matrix). See \cite{Re-84} for the original proof.}
\label{fig:intro_scaling-1}
\end{figure}

\begin{figure}[hbtp]
\centering
\includegraphics[scale=0.5]{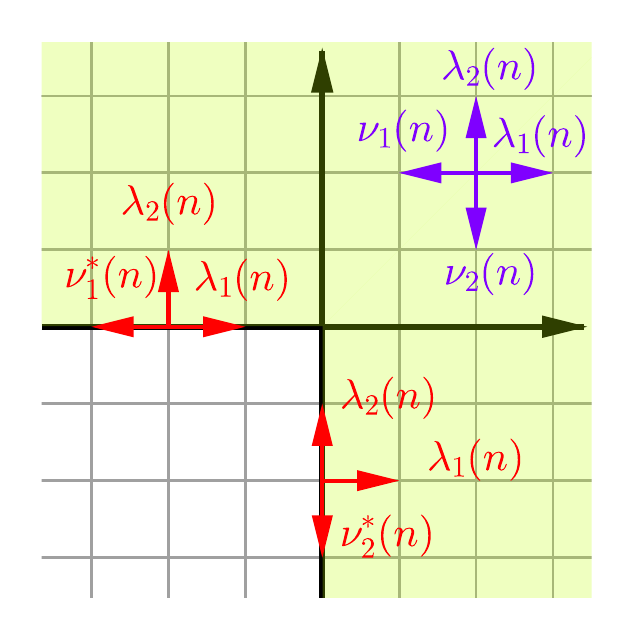}
\includegraphics[scale=0.5]{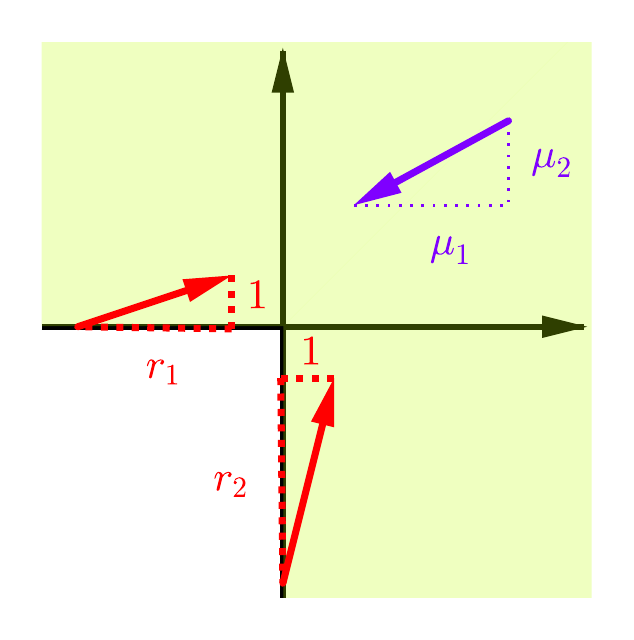}
\caption{For the exact same reasons as for Figure \ref{fig:intro_scaling-1}, in the three-quarter plane, the discrete model on the left picture converges to the reflected Brownian motion on the right display.}
\label{fig:intro_scaling-2}
\end{figure}

Our third motivation is to develop an analytic method, which turns out to be particularly useful in a number of contexts. This method was invented by Fayolle, Iasnogorodski and Malyshev in the seventies, see \cite{Ma-71,FaIa-79,FIM-2017}; at that time, the principal motivation was to study the stationary distribution of ergodic reflected random walks in a quadrant. The main idea is to state a functional equation satisfied by the associated generating functions and to reduce it to certain boundary value problems, which after analysis happen to be solvable in closed form. This approach has been applied to the framework of Brownian diffusions in a quadrant \cite{Fo-82,Fo-84,BaFa-87}, to symmetric random walks in a three-quarter plane \cite{RaTr-19,Tr-19}, but never to the present setting of diffusions in non-convex wedges. From this technical point of view, the present work will bring the following novelty: we will prove that our problem is generically reducible to a system of two boundary value problems (as a matter of comparison, only one single boundary value problem is needed in the convex case \cite{FrRa-19}). This formally leads to a matrix power series for the Laplace transform, as a solution of a Fredholm integral equation, see \eqref{eq:equaint}.

\begin{figure}[hbtp]
\centering
\includegraphics[scale=0.4]{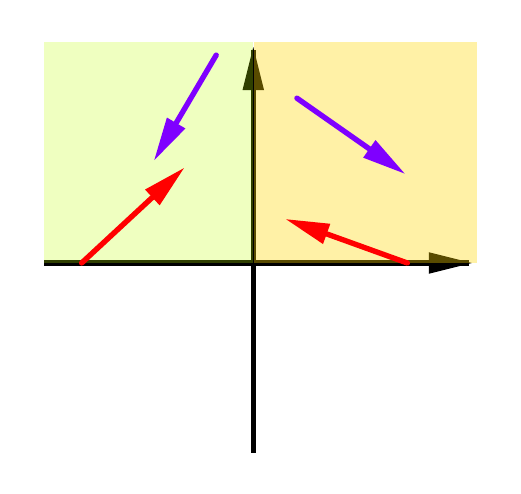}
\includegraphics[scale=0.4]{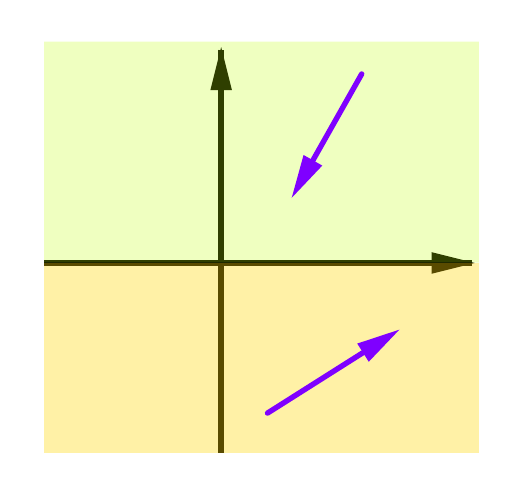}
\includegraphics[scale=0.4]{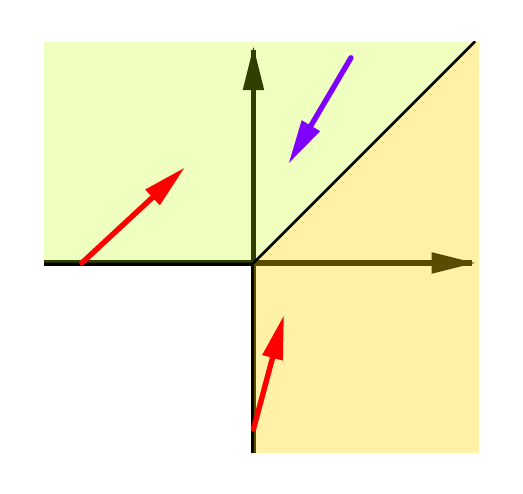}
\includegraphics[scale=0.4]{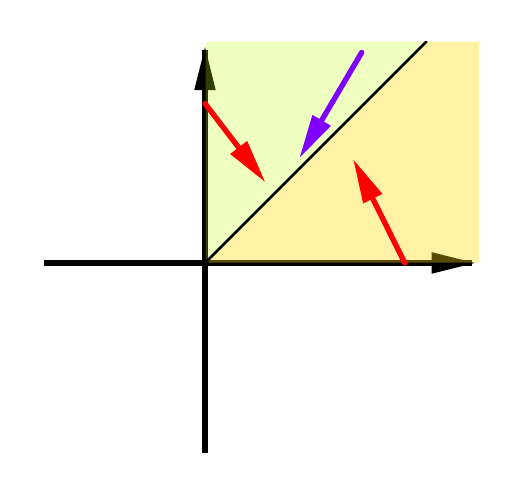}
\caption{Different models of (non-)reflected inhomogeneous Brownian motions in various cones of $\mathbb R^2$. Blue arrows represent drift vectors and red arrows stand for the reflection vectors on the boundary axes. In the second picture, when the two drifts are opposite, the vertical component is called a bang-bang Brownian motion.}
\label{fig:intro}
\end{figure}

Next, we aim at initiating the study of piecewise inhomogeneous Brownian models in cones of $\mathbb R^d$. To take a concrete example, consider a half-plane and view it as the union of two quarter planes glued along one half-axis (see Figure \ref{fig:intro}, leftmost picture). Then the process behaves as follows: in each quarter plane, its evolution is governed by a Brownian motion (with possible different drifts and covariance matrices); the process can pass from one quadrant to the other one through the porous interface; on the remaining boundaries, it is reflected in a standard way. Another example would consist in dividing the plane into two half-planes, as on Figure \ref{fig:intro}, left. This model may be viewed as a two-dimensional generalization of the so-called bang-bang process on $\mathbb R$, as studied in \cite{Sh-81}.

Piecewise inhomogeneous Brownian motions are related to our obtuse angle model as follows: splitting the three-quarter plane into two convex wedges (see the right display on Figure \ref{fig:intro}, or Figure \ref{fig:split}) and performing simple linear transformations, our model turns out to be equivalent to the inhomogeneous domain described above. 

These inhomogeneous models are reminiscent from a well-known model in queuing theory, known as the JSQ (for ``join the shortest queue'') model, see \cite[Chap.~10]{FIM-2017} or \cite{KuSu-03}. In this model, the quarter plane is divided into two octants ($\pi/8$-wedges) and the random walk obeys to different (very specific rules) according to the octant. See the rightmost picture on Figure \ref{fig:intro}. The techniques developed in this paper offer a potential approach to solve this (asymmetric) Brownian JSQ model.

Our fifth and final motivation is to provide tools leading to a comparative study of reflected Brownian motion in convex and non-convex cones. Does this model admit a kind of phase transition around the critical angle $\beta=\pi$? Some results in our paper tend to show that this is the case: while reflected Brownian motion in a convex cone may be studied with one single boundary value problem, two analogue problems are needed in the non-convex case. On the other hand, we also bring some evidence that the model has a smooth behavior at $\beta=\pi$: we are able to construct a one-parameter family of stationary distributions, whose formula is valid for any $\beta\in(0,2\pi)$ and, surprisingly, is independent of $\beta$! While we will leave the question of phase transition as an open problem, let us conclude with the expression of the density (written in polar coordinates) of this remarkable family:
\begin{equation}
\label{eq:expression_remarkable_density}
   \pi(r,t)=\frac{C}{\sqrt{r}} \cos\Bigl(\frac{t}{2}\Bigr) e^{-2r \vert \mu\vert \cos^2\left(\frac{t}{2}\right)},\quad  \vert t\vert\leq \frac{\beta}{2}<\pi,
\end{equation}
where $\vert\mu\vert$ stands for the norm of the drift and $C$ is a  normalization constant; see Figure~\ref{fig:Franceschi_condition2}. The example \eqref{eq:expression_remarkable_density} is obtained  \cite{BoElFrHaRa-21} in the convex case, it immediately extends to the non-convex case.

\begin{figure}[hbtp]
\centering
\includegraphics[scale=0.4]{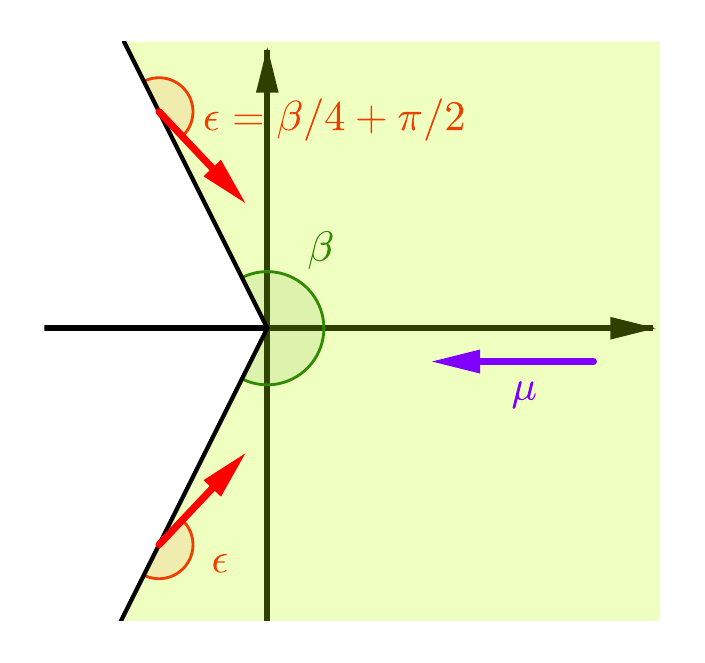}
\includegraphics[scale=0.4]{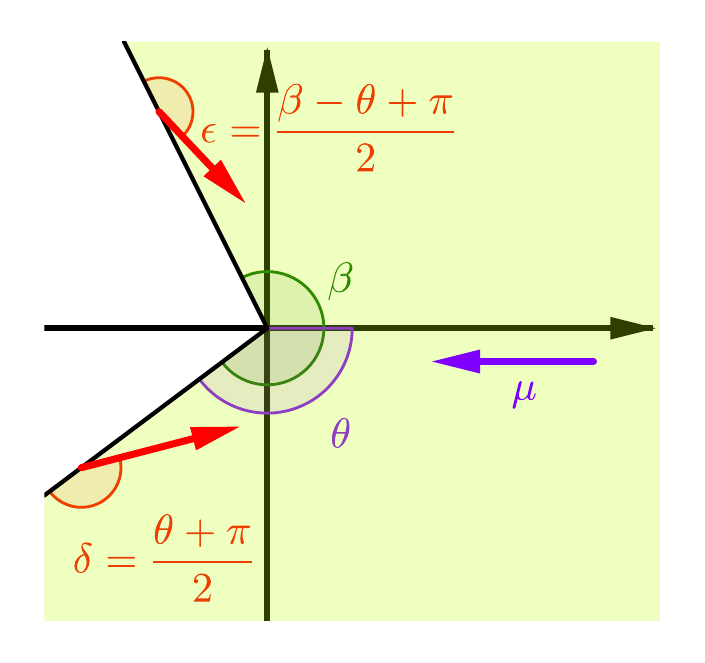}
\caption{Parameters of the model leading to the remarkable~stationary distribution \eqref{eq:expression_remarkable_density}. A priori, no symmetry assumption is done on the parameters (the model on the left is symmetric, contrary to the one on the right). Similarly, no convex hypothesis is done on the cone. The formula \eqref{eq:expression_remarkable_density} has been obtained in \cite{BoElFrHaRa-21} in the convex case and in \cite[\S 9]{Ha-78} in a more restrictive case, and we observe here that the same formula holds for any value of the opening angle $\beta$. Up to our knowledge, \eqref{eq:expression_remarkable_density} is the unique example for which the stationary distribution density is known in closed form for a non-convex cone.}
\label{fig:Franceschi_condition2}
\end{figure}

\subsection{Main results}
To conclude this introduction, we present the structure of the paper and our main results.
\begin{itemize}
   \item Section \ref{sec:model}: definition of the model, statement of the recurrence conditions and introduction of the stationary distribution, Proposition \ref{prop:BAR} on the classical basic adjoint relationship (characterizing the stationary distribution)
   \item Section \ref{sec:functional_equation}: Proposition \ref{prop:main_func_eq} on a system of two functional equations (the $3/4$ plane is split into two convex cones of angle $3\pi/8$, and one equation is stated for each domain) 
   \item Section \ref{sec:asymmetric_case}: general study of the asymmetric case. Various statements on the kernel, meromorphic continuation of the unknown Laplace transforms, reduction to a Riemann-Hilbert vector boundary value problem (Theorem~\ref{thm:vectorial_BVP}), relation with a Fredholm integral equation
   \item Section \ref{sec:symmetric_case}: general study of the symmetric case. Equivalence with a standard Brownian motion in a quarter plane, resolution and examples
\end{itemize}

\subsection*{Acknowledgments}
We thank Andrew Elvey Price and Kavita Ramanan for interesting discussions. 
\section{Semimartingale reflected Brownian motion avoiding a quarter plane}
\label{sec:model}

\subsection{Definition of the process}

We denote the three-quarter plane as
\begin{equation*}
   S \egaldef \{ (z_1,z_2)\in \mathbb{R}^2 : z_1 \geq 0 \text{ or } z_2 \geq 0 \}.
\end{equation*}
The parameters of the model are the drift $\mu=(\mu_1,\mu_2)$, the reflection vectors $R_1=(r_1,1)$ and $R_2=(1,r_2)$, and the covariance matrix
\begin{equation}
\label{eq:covariance_matrix}
\Sigma= \left(
\begin{array}{cc}
\sigma_1 & \rho \\ 
\rho & \sigma_2
\end{array} 
\right),
\end{equation}
see Figure \ref{fig:three_quarter_plane}. Throughout this study, $\Sigma$ will be assumed to be elliptic, i.e., $\sigma_1\sigma_2-\rho^2>0$, thus discarding the degenerated case $\sigma_1\sigma_2-\rho^2=0$.
\begin{figure}[hbtp]
\centering
\includegraphics[scale=0.4]{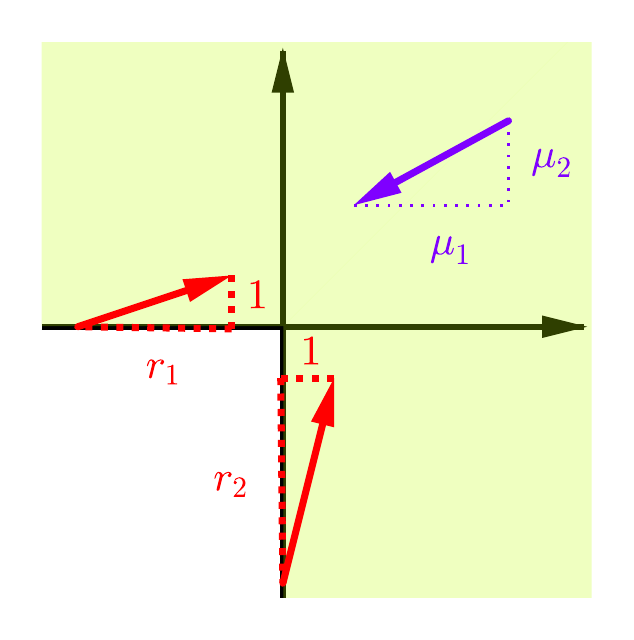}
\caption{In green color, the three-quarter plane $S$, in blue the drift $\mu$ and in red the reflection vectors $R_1$ and $R_2$.}
\label{fig:three_quarter_plane}
\end{figure}

More specifically, we define the obliquely reflected Brownian motion $Z_t=(Z_t^1,Z_t^2)$ in the three-quarter plane $S$ as follows:
\begin{equation}
\label{eq:defZt}
\begin{cases}
Z_t^1\egaldef Z_0^1+ W_t^1+ \mu_1 t+ r_1 L_t^1 + L_t^2,
\\
Z_t^2\egaldef Z_0^2+ W_t^2+ \mu_2 t+ L_t^1 + r_2 L_t^2,
\end{cases}
\end{equation}
where $W_t$ is a planar Brownian motion of covariance $\Sigma$, $L^1_t$ is (up to a constant) the local time on the negative part of the abscissa ($z_1\leq 0$) and $L_t^2$ is the local time on the negative part of the ordinate axis ($z_2\leq 0$). In case of a zero drift, such a semimartingale definition of reflected Brownian motion is proposed in the reference paper \cite{Wi-85} (including the non-convex wedges); it readily extends to our drifted case.

Throughout this paper, we assume that the process is positive recurrent and has a unique stationary distribution (or invariant measure). As it turns out, this is equivalent to
\begin{equation}
\label{eq:CNS1_stationary}
   \mu_1<0 \quad \text{and} \quad \mu_2<0,
\end{equation}
together with
\begin{equation}
\label{eq:CNS2_stationary}
   \mu_1-r_1\mu_2>0 \quad \text{and} \quad
   \mu_2-r_2\mu_1>0.
\end{equation}
(In particular, one has $r_1>0$ and $r_2>0$.) We couldn't find any reference proving this statement; however, the same techniques as in \cite{HoRo-93} by Hobson and Rogers or \cite[Sec.~6]{HaWi-87} (proving necessary and sufficient conditions in the quadrant similar as \eqref{eq:CNS1_stationary} and \eqref{eq:CNS2_stationary}) could be used here. Figure \ref{fig:three_quarter_plane} represents a case where the parameters satisfy both conditions \eqref{eq:CNS1_stationary} and \eqref{eq:CNS2_stationary}. 
The heuristic of these conditions is the following. The process is either recurrent or transient, and if the process is transient, then it tends to infinity. By \eqref{eq:CNS1_stationary}, the drift vector is negative and there are only two possible behaviours for the process to tend to infinity: either, as $t\to\infty$, $Z_t^1$ tends to $-\infty$ and $Z_t^2 \geqslant 0$, or $Z_t^2$ tends to $-\infty$ and $Z_t^1 \geqslant 0$. So, we come down to a couple of problems in half-planes, which are easy to understand, since reflected Brownian motion in a half-plane is a well-studied process. For example, in the upper half-plane, the conditions for the process $Z_t^1$ not to tend to $-\infty$ is $\mu_1-r_1\mu_2 \geqslant 0$ ($\mu_1-r_1\mu_2=0$ is a null recurrent  case). Combining the two conditions leads heuristically to~\eqref{eq:CNS2_stationary}. Indeed, coupling arguments associated with a pathwise construction could make the above reasoning more rigourous, but we shall omit them.

Under conditions \eqref{eq:CNS1_stationary} and \eqref{eq:CNS2_stationary}, we denote by $\Pi$ the unique stationary distribution. In the case of a quarter plane, it is proved in \cite{HaWi-87} that $\Pi$ admits a density with respect to the Lebesgue measure, see Lemma~12 in \cite[Sec.~7]{HaWi-87}. Using exactly the same argument (in particular Lemma~9 in \cite[Sec.~7]{HaWi-87}), we deduce that in the three-quarter plane, $\Pi$ admits a density, which we will denote by $\pi$. We also define the boundary invariant measures by
\begin{equation*}
   {\nu}_{1} (A) = \mathbb{E}_\Pi \int_0^1 \mathrm{1}_{A\times\{0\} } (Z_s) \mathrm{d}L_s^1
   \quad \text{and} \quad
   {\nu}_{2} (A) = \mathbb{E}_\Pi \int_0^1 \mathrm{1}_{\{0\} \times A} (Z_s) \mathrm{d}L_s^2.
\end{equation*}
The measure ${\nu}_{1}$ has its support on $\{z_1 \leq 0 \}$ and ${\nu}_{2}$ has its support on $\{z_2 \leq 0 \}$. We will also denote by $\nu_1 (z_1)$ and  $\nu_2 (z_2)$ their respective densities.

Remark that a reflected Brownian motion in the three-quarter plane could be defined as well in the non-semimartingale case; motivations to consider these cases are proposed in \cite{RaRe-03}.

\subsection{Basic adjoint relationship}
Our approach is based on the following identity, called basic adjoint relationship, which in the orthant case is proved in \cite{DaHa-92,HaWi-87}.

\begin{prop}
\label{prop:BAR}
If $f$ is the difference of two convex functions in $S$, and if $\int_S f(z_1,z_2)\pi(z_1,z_2) \mathrm{d}z_1 \mathrm{d}z_2$ and all the integrals below converge, then
\begin{multline*}
\int_S \mathcal{G} f(z_1,z_2) \pi(z_1,z_2) \mathrm{d}z_1 \mathrm{d}z_2 +\\ \int_{-\infty}^0 R_1 \cdot \nabla f(z_1,0) \nu_1 (z_1) \mathrm{d}z_1
+
\int_{-\infty}^0 R_2 \cdot \nabla f(0,z_2) \nu_2 (z_2)\mathrm{d}z_2=0,
\end{multline*}
where the generator is equal to
\begin{equation*}
\mathcal{G} f = \frac{1}{2} \left(\sigma_1 \frac{\partial^2 f}{\partial z_1^2}+2\rho \frac{\partial^2 f}{\partial z_1 \partial z_2} +\sigma_2 \frac{\partial^2 f}{\partial z_2^2}
 \right)
+ \mu_1 \frac{\partial f}{\partial z_1}
+\mu_2 \frac{\partial f}{\partial z_2}.
\end{equation*}
\end{prop}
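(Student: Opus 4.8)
The plan is to establish the basic adjoint relationship (BAR) by adapting the standard orthant argument of Dai--Harrison \cite{DaHa-92,HaWi-87} to the three-quarter plane $S$, whose only geometric difference is the presence of a non-convex corner at the origin; since the boundary $\partial S$ is still a union of two half-lines along the coordinate axes, the two local-time terms are of exactly the same form as in the convex case. First I would apply It\^o's formula to the process $t\mapsto f(Z_t^1,Z_t^2)$, using the semimartingale decomposition \eqref{eq:defZt}. Because $f$ is a difference of two convex functions, its second derivatives exist as signed measures and the It\^o--Tanaka (or Meyer--It\^o) formula applies, giving
\begin{equation*}
f(Z_t)=f(Z_0)+\int_0^t \mathcal{G}f(Z_s)\,\dd s+\int_0^t \nabla f(Z_s)\cdot \dd W_s+\int_0^t R_1\cdot\nabla f(Z_s)\,\dd L_s^1+\int_0^t R_2\cdot\nabla f(Z_s)\,\dd L_s^2,
\end{equation*}
where the $\dd s$-integrand is $\mathcal{G}f$ because the quadratic variation of $W$ is $\Sigma\, t$ and the bounded-variation contributions of $\mu t$ and the local times feed into the first-order terms.

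Next I would take expectations under the stationary initial law $\Pi$. The martingale term $\int_0^t\nabla f(Z_s)\cdot\dd W_s$ has zero expectation (after a standard localization argument, justified here by the hypothesis that all the integrals in the statement converge, which controls $\nabla f$ along the trajectory). Stationarity gives $\E_\Pi[f(Z_t)]=\E_\Pi[f(Z_0)]$, so the left-hand side vanishes. Then I would divide by $t$ and use stationarity again: $\frac1t\E_\Pi\int_0^t \mathcal{G}f(Z_s)\,\dd s=\E_\Pi[\mathcal{G}f(Z_0)]=\int_S \mathcal{G}f\,\dd\Pi$, and likewise $\frac1t\E_\Pi\int_0^t R_i\cdot\nabla f(Z_s)\,\dd L_s^i=\int_{\partial_i S} R_i\cdot\nabla f\,\dd\nu_i$ by the very definition of the boundary measures $\nu_1,\nu_2$ given just above the statement (the normalization $\int_0^1$ in those definitions being what makes the time-averaging work out with no extra constant). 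Rewriting $\dd\Pi=\pi\,\dd z_1\dd z_2$ and $\dd\nu_i=\nu_i(z_i)\,\dd z_i$ on the respective half-lines yields exactly the claimed identity.

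The main obstacle is not the algebra but the justification of the two limiting/integrability steps: (i) that the stochastic integral is a true martingale, or at least has vanishing expectation along a suitable sequence $t\to\infty$ (or, alternatively, that one may work directly at a fixed $t$ and divide by $t$), and (ii) that the occupation-measure identities $\frac1t\E_\Pi\int_0^t g(Z_s)\,\dd s\to\int g\,\dd\Pi$ and their local-time analogues hold with $g$ replaced by the possibly unbounded functions $\mathcal{G}f$ and $R_i\cdot\nabla f$. Both are handled by the blanket assumption in the statement that ``all the integrals below converge'', combined with positive recurrence and the stationarity of $\Pi$; concretely one truncates $f$ (or stops at exit times of large balls), obtains the identity for the truncation, and passes to the limit by dominated convergence using the assumed integrability. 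One should also note that the non-convex corner at $0$ causes no difficulty: the process spends zero Lebesgue time and accumulates zero local time at the origin, so the single point $\{0\}$ contributes nothing, and the It\^o--Tanaka formula is insensitive to the global non-convexity of $S$ since it only uses the local (smooth, one-dimensional) structure of each boundary ray. This is why the proof is, modulo these integrability checks, identical to the orthant case treated in \cite{DaHa-92,HaWi-87}.
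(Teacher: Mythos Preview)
Your proposal is correct and follows essentially the same approach as the paper: apply the It\^o--Tanaka formula to $f(Z_t)$ using the semimartingale decomposition \eqref{eq:defZt}, then take expectations under the stationary law $\Pi$ so that $\E_\Pi[f(Z_t)]=\E_\Pi[f(Z_0)]$ cancels and the martingale term vanishes. The only cosmetic difference is that the paper works directly at $t=1$ (which matches the normalization in the definition of $\nu_i$) rather than dividing by $t$ and time-averaging; your additional remarks on localization, integrability, and the harmlessness of the non-convex corner are reasonable elaborations that the paper leaves implicit under the blanket hypothesis that all integrals converge.
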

\begin{proof}
We apply the It\^o-Tanaka formula to the semimartingale $Z_t$, see Theorem~1.5 in \cite[Chap.~VI \S 1]{Revuz1999}. Note that, in the formula of the previous reference, there is no need to assume that $f$ is $\mathcal{C}^2$ since, when $f$ is convex, its second derivative in the sense of distibution is a positive measure. We obtain
\begin{equation*}
   f(Z_t)=f(Z_0)+\int_0^t \mathcal{G}f (Z_s) \mathrm{d}s +\int_0^t \nabla f (Z_s) \cdot \mathrm{d} W_s +  \sum_{i\in\{1,2\}} \int_0^t R_i \cdot \nabla f (Z_s) \mathrm{d} L_s^i .
\end{equation*}
To conclude, we take the expectation over $\Pi$ in the above equality.
\end{proof}
Since we take $f$ to be the difference of two convex functions, the first derivatives of $f$ are defined as the left derivatives, and the second derivatives of $f$ are understood in the sense of distributions.
 
\begin{rem}
\label{rem:diff}
Continuity and differentiability of the measure $\pi(z_1,z_2)$ directly follow from the properties of weak solutions to the 
partial differential equation satisfied by $\pi$ and stated in Proposition~\ref{prop:BAR}.  Indeed, a famous result known as Weyl's lemma \cite[Lem.~2]{We-40}
asserts that a weakly harmonic function coincides almost everywhere with a strongly harmonic function, and is in particular smooth. This result generalizes to distributions associated to hypoelliptic operators. Here, $\int_S \mathcal{G} f(z_1,z_2) \pi(z_1,z_2) \mathrm{d}z_1 \mathrm{d}z_2 =0$, for all $f$ which are smooth in $S$ and which cancel near the boundary of $S$, and we deduce that $\pi$ is smooth inside of $S$.
\end{rem}

\section{The main functional equations}
\label{sec:functional_equation}


Our goal is to use the basic adjoint relationship of Proposition~\ref{prop:BAR} to obtain a kernel equation for the Laplace transform of the stationary distribution. In the case of a convex cone, it is enough to take $f(z_1,z_2)=e^{x z_1+ y z_2}$ to obtain the functional equation, see \cite{DaMi-11,franceschi_tuttes_2016,FrRa-19}. However, if the cone is not convex, the associated integrals will not converge. So we need to divide the three-quarter plane into  two regions. 
We define the two following $\frac{3}{8}$-planes:
\begin{equation*}
   S_1\egaldef\{(z_1,z_2)\in\mathbb{R}^2: z_1\leq z_2 \text{ and } z_2 \geq 0 \}
\end{equation*}
 and $S_2\egaldef S\setminus S_1$, see Figure~\ref{fig:split}.
 
\begin{figure}[hbtp]
\centering
\includegraphics[scale=0.6]{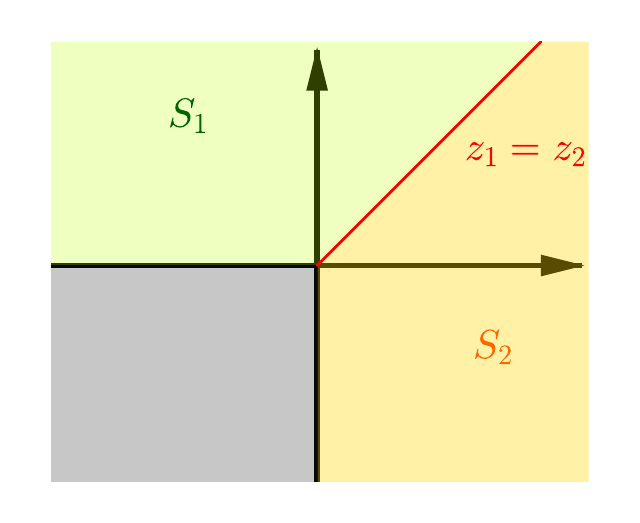}\quad
\includegraphics[scale=0.73]{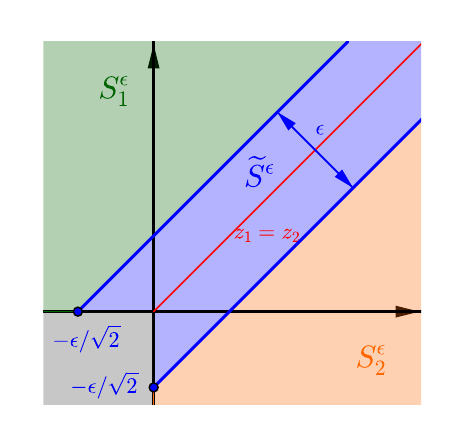}
\caption{Left: the three-quarter plane divided in two sets, $S_1$ in green and $S_2$ in orange. Right: the three sets $S_1^\epsilon$ (in green color), $S_2^\epsilon$ (orange) and $\widetilde{S}^\epsilon$ (blue).}
\label{fig:split}
\end{figure}

Let us define the Laplace transform of the invariant measure $\pi$ in $S_1$ by
\begin{equation}
\label{eq:formula_Laplace_transform_L1}
L_1(x,y)
\egaldef \int_{S_1} e^{x z_1+ y z_2} \pi(z_1,z_2) \mathrm{d}z_1 \mathrm{d}z_2,
\end{equation}
the Laplace transform of $\pi$ on the diagonal 
\begin{equation*}
m(x+y) \egaldef \int_0^\infty e^{(x+y) z} \pi(z,z) \mathrm{d}z ,
\end{equation*}
the Laplace transform of the normal derivative of $\pi$ on the diagonal (which does exist by Remark~\ref{rem:diff})
\begin{equation}
\label{eq:formula_Laplace_transform_n}
 n(x+y) \egaldef  \int_0^\infty e^{(x+y) z}
 \left( \frac{\partial \pi}{\partial z_1}(z,z) -\frac{\partial \pi}{\partial z_2}(z,z)
 \right)
 \mathrm{d}z,
 \end{equation}
and the Laplace transform of the boundary measure $\nu_1$ on the abscissa
\begin{equation*}
\ell_1(x) 
 \egaldef \int_{-\infty}^0 e^{x z_1}
\nu_1 (z_1) \mathrm{d}z_1.
\end{equation*} 
Introduce finally the constant 
\begin{equation*}
\theta \egaldef \frac{\sigma_1+\sigma_2 -2\rho}{2},
\end{equation*}
which is positive due to the ellipticity condition $\rho^2-\sigma_1\sigma_2<0$.

The remainder of Section \ref{sec:functional_equation} is devoted to proving the following result:
\begin{prop}[Functional equation in $S_1$]
\label{prop:main_func_eq}
For all $(x,y)$ in $\{\Re{(x)}\geq0,\,\Re{(x+y)}\leq0\}$, we have
\begin{multline*}
  -K(x,y)L_1(x,y) =\\ k (x,y) m (x+y) +\theta n(x+y) + k_1(x,y) \ell_1(x)
+(1- r_1) \nu_1(0) + (r_2- 1) \nu_2(0),
\end{multline*}
where the kernel is defined by
\begin{equation}
\label{eq:kernel_K}
   K(x,y)\egaldef \frac{1}{2} \left(\sigma_1 x^2 +2\rho xy +\sigma_2 y^2\right)+ \mu_1 x+\mu_2 y,
\end{equation}
while $k$ and $k_1$ are polynomials of degree one in two variables given by
\begin{align*}
k (x,y) &\egaldef 
\frac{\theta(y-x)}{2}+ \frac{1}{2}(\sigma_2-\sigma_1)(x+y) +\mu_2-\mu_1,
\\
k_1(x,y) &\egaldef r_1 x +y.
\end{align*}
\end{prop}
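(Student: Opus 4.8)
The plan is to apply the basic adjoint relationship of Proposition~\ref{prop:BAR} to a suitable exponential test function restricted to $S_1$, and to handle the ``new'' boundary terms that arise along the diagonal $\{z_1=z_2\}$. Concretely, I would take $f(z_1,z_2)=e^{xz_1+yz_2}\mathbf 1_{S_1}(z_1,z_2)$, or rather work on the region $S_1^\epsilon$ of Figure~\ref{fig:split} and let $\epsilon\to 0$; since this $f$ is not globally a difference of convex functions on all of $S$, one first justifies the manipulation by approximating $S_1$ by the smooth domains $S_1^\epsilon$, applying Itô--Tanaka (or Green's formula) on $S_1^\epsilon$, and passing to the limit using the convergence of the integrals, guaranteed in the stated domain $\{\Re(x)\ge 0,\ \Re(x+y)\le 0\}$ (so that $e^{xz_1+yz_2}$ decays both on the half-axis $z_1\le 0$ and on the diagonal where $z_1=z_2\to+\infty$).

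The computation then splits into three contributions. First, $\int_{S_1}\mathcal Gf\,\pi = \int_{S_1}K(x,y)e^{xz_1+yz_2}\pi\,\dd z_1\dd z_2 = K(x,y)L_1(x,y)$, using the definition \eqref{eq:kernel_K} of the kernel and the fact that $e^{xz_1+yz_2}$ is an eigenfunction of $\mathcal G$ with eigenvalue $K(x,y)$; this is where $L_1$ enters with the right coefficient. Second, the piece of $\partial S_1$ lying on the negative abscissa $\{z_2=0,\ z_1\le 0\}$ contributes $R_1\cdot\nabla f$ integrated against $\nu_1$: since $R_1\cdot\nabla f=(r_1x+y)e^{xz_1}$ on that axis this yields $k_1(x,y)\ell_1(x)$. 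Third — and this is the genuinely new part — the diagonal segment $\{z_1=z_2=z\ge 0\}$ of $\partial S_1$ produces, via Itô--Tanaka applied to the indicator $\mathbf 1_{S_1}$, a local-time-type boundary integral along the diagonal. Parametrising this segment and using that the inward normal to $S_1$ there is proportional to $(1,-1)$, one gets a one-dimensional integral over $z\ge 0$ involving $e^{(x+y)z}$ times a combination of $\pi(z,z)$ and of the normal derivative $\tfrac{\partial\pi}{\partial z_1}(z,z)-\tfrac{\partial\pi}{\partial z_2}(z,z)$; carefully tracking the coefficients (the second-order part of $\mathcal G$ gives both a $\pi$ and a $\partial_\perp\pi$ term, the first-order part a $\pi$ term) should reproduce exactly $k(x,y)m(x+y)+\theta n(x+y)$, with $\theta=\tfrac12(\sigma_1+\sigma_2-2\rho)$ coming from $\tfrac12(1,-1)\Sigma(1,-1)^{\!\top}$ and $k$ from the remaining first-order and asymmetric second-order contributions. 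Finally, the two corner terms $(1-r_1)\nu_1(0)+(r_2-1)\nu_2(0)$ appear as ``boundary of the boundary'' contributions at the origin, i.e. from integrating by parts the one-dimensional axis/diagonal integrals and collecting the endpoint values at $z=0$ (the coefficient $1-r_1$ vs.\ $r_2-1$ recording that $(1,-1)$ is the relevant tangent/normal direction and that $R_1,R_2$ have the off-diagonal entries $1$).

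The main obstacle I expect is the rigorous treatment of the diagonal boundary term: unlike the axes, the diagonal is not a reflection boundary of the original process, so there is no local time $L^i$ associated to it, and the term must instead be extracted purely from the distributional second derivatives of $f=e^{xz_1+yz_2}\mathbf 1_{S_1}$ (which carries a surface Dirac mass along the diagonal and its normal derivative). Equivalently, one must justify integration by parts / Green's identity on $S_1$ for the bilinear form $\int_{S_1}\mathcal Gf\cdot\pi$, transferring derivatives onto $\pi$ and producing boundary integrals of $\pi$ and $\partial_\perp\pi$ along $\partial S_1$ — which requires enough regularity of $\pi$ up to the diagonal and enough decay to discard the terms at infinity. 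Using the $S_1^\epsilon$ approximation and the smoothness of $\pi$ in the interior, together with the known integrability of $\pi,\nabla\pi$ against exponentials in the stated $(x,y)$-domain, should make this limiting argument go through; the remaining work is the bookkeeping that matches the emerging coefficients with the stated $k$, $k_1$, $\theta$ and the two corner constants.
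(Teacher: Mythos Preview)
Your overall strategy is the one the paper follows: regularise the indicator of $S_1$, apply Proposition~\ref{prop:BAR} to $f_\epsilon=e^{xz_1+yz_2}I_\epsilon$, and let $\epsilon\to0$. The paper makes this concrete with the explicit piecewise-linear transition $I_\epsilon$ of~\eqref{eq:definition_function_I}, so that $f_\epsilon$ is globally a difference of convex functions on $S$ and the BAR applies as stated; the diagonal contributions $m$ and $n$ then come exactly from the first and (distributional) second derivatives of $I_\epsilon$, as you anticipate, and the coefficient $\theta=\tfrac12(\sigma_1+\sigma_2-2\rho)$ drops out of $\tfrac12(\partial_x-\partial_y)^2K$, matching your $(1,-1)\Sigma(1,-1)^\top/2$.

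Where your sketch goes wrong is the mechanism for the two corner constants. They do \emph{not} arise from integrating by parts any one-dimensional integral, and in particular $\nu_2(0)$ cannot be produced by an argument confined to $S_1$, since the support of $\nu_2$ meets $\overline{S_1}$ only at the origin. In the paper's proof the BAR is applied on all of $S$, and the boundary terms $\int R_i\cdot\nabla f_\epsilon\,\dd\nu_i$ contain the piece coming from $\nabla I_\epsilon$; because $\nabla I_\epsilon$ is supported in the thin strip $\widetilde S^\epsilon$ around the diagonal, these pieces are concentrated near the origin on each axis and converge to $(1-r_1)\nu_1(0)$ and $(r_2-1)\nu_2(0)$. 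The $\nu_2$ term appears precisely because the smoothed $I_\epsilon$ is \emph{not} identically zero on a short segment of the negative $z_2$-axis---a contribution you would miss if you worked with the sharp cutoff $\mathbf 1_{S_1}$ or thought of the argument as Green's formula on $S_1$ alone. So the regularisation must be two-sided across the diagonal, not a one-sided approximation of $S_1$ from within.
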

A symmetric functional equation holds on the domain $S_2$; it involves the functions $m$ and $n$ above, as well as
\begin{equation*}
L_2(x,y)
\egaldef \int_{S_2} e^{x z_1+ y z_2} \pi(z_1,z_2) \mathrm{d}z_1 \mathrm{d}z_2
\quad
\text{and}
\quad
\ell_2(y) 
 \egaldef \int_{-\infty}^0 e^{y z_2} \nu_2 (z_2) \mathrm{d}z_2.
\end{equation*} 
See \eqref{eq:sys2} for the exact statement. The proof of Proposition~\ref{prop:main_func_eq} is rather lengthy and postponed to Appendix~\ref{app:p}.

\section{The general asymmetric case}
\label{sec:asymmetric_case}

\subsection{Sketch of the approach}
For the sake of brevity, we shall put
\begin{equation*}
E \egaldef (1- r_1) \nu_1(0) - (1-r_2) \nu_2(0).
\end{equation*}
Then the two functional equations obtained in Section~\ref{sec:functional_equation} (see in particular Proposition~\ref{prop:main_func_eq}), corresponding to the regions $S_1$ and $S_2$ in the $(z_1,z_2)$-plane, see Figure \ref{fig:split}, are simply rewritten as follows:
\begin{equation}\label{eq:sys1}
K(x,y)L_1(x,y)+ k (x,y) m (x+y) + \theta n(x+y) + k_1(x,y) \ell_1(x) + E = 0,
\end{equation}
\hspace{2cm} \emph{in the region} $\{\Re{(x)}\geq0,\,\Re{(x+y)}\leq0\}$;
\begin{equation}\label{eq:sys2}
K(x,y)L_2(x,y) - k(x,y) m (x+y) - \theta n(x+y) + k_2(x,y) \ell_2(y) - E = 0, 
\end{equation}
\hspace{2cm} \emph{in the region} $\{\Re{(y)}\geq0,\,\Re{(x+y)}\leq0\}$.

\medskip
\noindent The main idea is to build a system, where the new variables are defined in \emph{one and the same region}, by means of a simple change of variables. Clearly, this operation has a cost, since there will be two different kernels, the positive side being they can be simultaneously analyzed  starting from a common domain.
The key milestones of  the study are listed hereunder:
\emph{\begin{itemize}
   \item Make the meromorphic continuation of all functions in their respective (cut) complex planes (see Theorem~\ref{thm:cont}).
   \item Construct a vectorial Riemann boundary value problem for the pair $(\ell_1,\ell_2)$ (see Theorem~\ref{thm:vectorial_BVP}).
   \item Derive a Fredholm integral equation for $m$ (see Equation~\eqref{eq:equaint}). 
\end{itemize}}

\subsection{Functional equations and kernels}\label{sec:equafunc}
Setting respectively 
\begin{equation*}
   p=-x, \quad  q= x+y, \quad \mbox{in Equation \eqref{eq:sys1}},
\end{equation*}
and
\begin{equation*}
   p=-y, \quad  q= x+y, \quad \mbox{in Equation \eqref{eq:sys2}},
\end{equation*}
leads to the system 
\begin{align}
U(p,q)L_1(p,q)+ A(p,q) m(q) + \theta n(q) + C(p,q) \ell_1(p) + E &= 0,\label{eq:sys3}\\[0.2cm]
V(p,q)L_2(p,q)+ B(p,q) m(q) - \theta n(q) + D(p,q) \ell_2(p) - E &= 0, \label{eq:sys4}
\end{align}
where both equations are a priori defined in the domain $\{\Re{(p)}\leq 0,\,\Re{(q)}\leq0\}$, and
\begin{equation}\label{eq:Ker1} 
\begin{cases}
U(p,q) \egaldef \DD \theta p^2 +  \frac{\sigma_2}{2} q^2 +(\sigma_2-\rho)pq +(\mu_2-\mu_1)p+ \mu_2 q, \\[0.3cm] 
V(p,q) \egaldef \DD \theta p^2  +  \frac{\sigma_1}{2} q^2 +(\sigma_1-\rho)pq +(\mu_1-\mu_2)p+ \mu_1 q,\\[0.3cm]
\DD A(p,q) \egaldef \frac{\theta (2p+q)}{2}+ \frac{(\sigma_2-\sigma_1)q}{2} +\mu_2-\mu_1, \\[0.2cm]
\DD B(p,q) \egaldef \frac{\theta (2p+q)}{2}+ \frac{(\sigma_1-\sigma_2)q}{2} +\mu_1-\mu_2, \\[0.2cm]
C(p,q) \egaldef (1-r_1)p+q, \\[0.2cm]
D(p,q) \egaldef (1-r_2)p+q.
\end{cases}
\end{equation}

\paragraph{\textbf{Notation}}
\emph{For convenience and to distinguish between the two kernels, we shall add in a superscript position the letter $u$ (resp.\ $v$)\ to any quantity related to the kernel $U(x,y)$ (resp.\ $V(x,y)$). 
Moreover, if a property holds both for $u$ and $v$, the superscript letter is omitted ad libitum}. 

\medskip

Accordingly, the branches of the algebraic curve $U=0$ (resp.\ $V=0$) over the $q$-plane will be denoted by $P^u_i(q)$ (resp.\ $P^v_i(q)$), $i=1,2$. By definition, they are solutions to
\begin{equation}
\label{eq:def_U_P_V_P}
   U(P_i^u(q),q)=0\quad \text{and}\quad V(P_i^v(q),q)=0.
\end{equation}
In particular, they are simple algebraic functions of order $2$. Similarly, $Q^u_i(p)$ (resp.\ $Q^v_i(p)$) will stand for the branches over the $p$-plane, $i=1,2$. 

Although we are mostly working under the stationary hypotheses \eqref{eq:CNS1_stationary} and \eqref{eq:CNS2_stationary}, notice that Lemmas~\ref{lem:PuPv} and \ref{lem:QuQv} below hold true for any value of the drift vector $(\mu_1,\mu_2)$.

\begin{lem}
\label{lem:PuPv}
The functions $P^u_i(q)$ and $P^v_i(q)$, $i=1,2$, are analytic in the whole complex plane cut along $(-\infty,q_1]\cup [q_2,\infty)$, where the branch points $q_1<0$ and $q_2>0$ are the two real roots of the equation 
\begin{equation}
\label{eq:bpP}
   (\rho^2-\sigma_1\sigma_2)q^2 + 2[\mu_1(\rho-\sigma_2) + \mu_2(\rho-\sigma_1)]q 
+(\mu_1-\mu_2)^2 = 0.
\end{equation}
Remarkably, $q_1$ and $q_2$ are the same for the two kernels $U$ and $V$. Moreover:
\begin{itemize}
   \item The branches $P_1^u$ and $P_2^u$ are separated and satisfy 
\begin{equation}\label{eq:sep-pu}
\begin{cases}
\DD \Re(P_1^u(ix)) \leq 0 \leq \Re(P_2^u(ix)),  \quad  \forall x\in \mathbb{R}, \\[0.2cm]
\DD \Re(P_1^u(q))  \leq \Re(P_2^u(q)), \quad \forall q \in \Cc,  \\[0.2cm]
\DD P_1^u(0) = \min\left\{0, \frac{\mu_1-\mu_2}{2\theta}\right\}, \quad
P_2^u(0) = \max\left\{0, \frac{\mu_1-\mu_2}{2\theta}\right\}.
\end{cases}
\end{equation}
They map the cut $(-\infty,q_1]$ (resp.\ $[q_2,\infty)$) 
onto  the right branch $\H^u_+$  (resp.\ left branch $\H^u_-$) of the hyperbola $\H^u$ with equation
\begin{equation}
\label{eq:hpu}
   (\rho^2 -\sigma_1\sigma_2)x^2 + (\sigma_2 - \rho)^2 y^2 +2(\sigma_2\mu_1-\rho\mu_2)x +
\frac{(\mu_2 - \mu_1)(\sigma_2(\mu_1 + \mu_2) - 2\rho\mu_2)}{2\theta} = 0.
\end{equation}
\item 
 The branches $P_1^v$ and $P_2^v$ are separated and satisfy 
\begin{equation}\label{eq:sep-pv}
\begin{cases}
\DD \Re(P_1^v(ix)) \leq 0 \leq \Re(P_2^v(ix)),  \quad  \forall x\in \mathbb{R}, \\[0.2cm]
\DD \Re(P_1^v(q))  \leq \Re(P_2^v(q)), \quad \forall p \in \Cc, \\[0.2cm]
\DD P_1^v(0) = \min\left\{0, \frac{\mu_2-\mu_1}{2\theta}\right\}, \quad
P_2^v(0) = \max\left\{0, \frac{\mu_2-\mu_1}{2\theta}\right\}.
\end{cases}
\end{equation}
They map the cut $(-\infty,q_1]$ (resp.\ $[q_2,\infty)$) 
onto  the right branch $\H^v_+$ (resp.\ left branch $\H^v_-$) of the hyperbola $\H^v$ with equation
\begin{equation}\label{eq:hpv}
(\rho^2-\sigma_1\sigma_2)x^2 + (\sigma_1 - \rho)^2 y^2 +2(\sigma_1\mu_2-\rho\mu_1)x +
\frac{(\mu_1 - \mu_2)(\sigma_1(\mu_1 + \mu_2) - 2\rho\mu_1)}{2\theta} = 0.
\end{equation}
\end{itemize}
\end{lem}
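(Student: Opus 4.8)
The statement is really a collection of facts about the two quadratic kernels $U(p,q)$ and $V(p,q)$ viewed as polynomials of degree two in $p$. My plan is to treat $U$ first in detail and then observe that $V$ is obtained from $U$ by the symmetry $\mu_1\leftrightarrow\mu_2$, $\sigma_1\leftrightarrow\sigma_2$ (which leaves $\theta$ and $\rho$ unchanged), so that every assertion about $P^v_i$ follows from the corresponding one about $P^u_i$ by relabelling. For $U$, I would solve $U(p,q)=0$ by the quadratic formula in $p$:
\begin{equation*}
   P^u_{1,2}(q)=\frac{-\big[(\sigma_2-\rho)q+(\mu_2-\mu_1)\big]\pm\sqrt{d^u(q)}}{2\theta},
\end{equation*}
where $d^u(q)$ is the discriminant, a quadratic polynomial in $q$. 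The first task is to compute $d^u(q)$ explicitly and to check that, after simplification, its two roots are exactly the roots of \eqref{eq:bpP}; the key algebraic miracle to verify is that the discriminant of $U$ and the discriminant of $V$, although a priori different quadratics, have the \emph{same} pair of roots $q_1<0<q_2$ (they will differ only by a positive multiplicative constant). The sign pattern $q_1<0<q_2$ comes from the ellipticity condition $\rho^2-\sigma_1\sigma_2<0$: the leading coefficient of \eqref{eq:bpP} is negative while the constant term $(\mu_1-\mu_2)^2\ge 0$, so the product of the roots is $\le 0$.

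Once the branch points are identified, analyticity of $P^u_i$ on $\C\setminus\big((-\infty,q_1]\cup[q_2,\infty)\big)$ is immediate: $d^u(q)$ is a quadratic with those two real roots and negative leading coefficient, hence $d^u(q)>0$ on $(q_1,q_2)$, so $\sqrt{d^u(q)}$ admits a single-valued analytic branch on the cut plane, and the two determinations give the two branches $P^u_1,P^u_2$. To prove that the branches are \emph{separated} in the sense of \eqref{eq:sep-pu}, I would argue that $P^u_1(q)=P^u_2(q)$ can only happen where $d^u(q)=0$, i.e. at $q_1$ or $q_2$, which lie outside the open cut plane; hence on the imaginary axis (and more generally on $\C$ minus the cuts) the two branches never collide, so the continuous real-valued function $\Re(P^u_2(q))-\Re(P^u_1(q))$ has constant sign. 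Evaluating at $q=0$ fixes that sign: there $\sqrt{d^u(0)}=|\mu_1-\mu_2|$ (up to the normalising constant, which I will have pinned down), giving $P^u_1(0)=\min\{0,\tfrac{\mu_1-\mu_2}{2\theta}\}$ and $P^u_2(0)=\max\{0,\tfrac{\mu_1-\mu_2}{2\theta}\}$, so $\Re P^u_1\le\Re P^u_2$ throughout. The stronger statement $\Re(P^u_1(ix))\le 0\le\Re(P^u_2(ix))$ for all real $x$ I would get by looking at $U(it,ix)=0$ for $t\in\mathbb R$: separating real and imaginary parts shows the product $P^u_1(ix)P^u_2(ix)$ has nonnegative real part times… — more cleanly, one checks directly from the explicit formula that $\Re\big(P^u_1(ix)+P^u_2(ix)\big)$ and $\Re\big(P^u_1(ix)P^u_2(ix)\big)$ have the right signs (the sum has the sign of $-$(a real linear form), the product is real and, by Vieta, equals $\tfrac{1}{2\theta}\Re$ of a manifestly sign-definite expression using $\rho^2-\sigma_1\sigma_2<0$), forcing one root into each closed half-plane.

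It remains to identify the images of the cuts. On $(-\infty,q_1]$ the discriminant $d^u(q)$ is negative (leading coefficient negative, $q<q_1$), so write $\sqrt{d^u(q)}=\pm i\sqrt{-d^u(q)}$; then $P^u_{1,2}(q)$ have a common real part $\tfrac{-[(\sigma_2-\rho)q+(\mu_2-\mu_1)]}{2\theta}$ and opposite imaginary parts $\pm\tfrac{\sqrt{-d^u(q)}}{2\theta}$. Setting $x=\Re P^u_i(q)$, $y=\Im P^u_i(q)$ and eliminating the parameter $q$ between these two relations yields a conic in $(x,y)$; a direct computation, using again the explicit form of $d^u$, gives precisely the hyperbola equation \eqref{eq:hpu}. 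Restricting to $q\le q_1$ versus $q\ge q_2$ selects one branch of the hyperbola or the other (one checks the sign of $x-$(vertex) along each cut), which I would match to $\H^u_+$ and $\H^u_-$ by evaluating at the branch points $q=q_1$ and $q=q_2$, where the two determinations meet on the real axis at the vertices of $\H^u$. The whole argument for $P^v_i$ and $\H^v$ is then obtained verbatim after exchanging $1\leftrightarrow 2$ in the subscripts of $\mu$ and $\sigma$. \textbf{The main obstacle} I anticipate is purely computational stamina in the last two steps: verifying that the discriminants of $U$ and $V$ share the roots of \eqref{eq:bpP} and, even more, that the parametric elimination produces exactly \eqref{eq:hpu}–\eqref{eq:hpv} with the stated coefficients requires careful bookkeeping of the many $\sigma,\rho,\mu,\theta$ terms; I would organise this by recording $d^u(q)=(\sigma_2-\rho)^2q^2 - 2\theta[\text{linear in }q] - \ldots$ explicitly once and reusing it, rather than recomputing.
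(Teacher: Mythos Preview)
Your overall plan---quadratic formula in $p$, analysis of the discriminant $d^u(q)$, parametric elimination for the hyperbola, and the $\mu_1\leftrightarrow\mu_2$, $\sigma_1\leftrightarrow\sigma_2$ swap to pass from $U$ to $V$---is exactly the paper's approach, and the purely computational parts (identifying \eqref{eq:bpP} and deriving \eqref{eq:hpu}) would go through as you describe.

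There is, however, a genuine gap in your argument for the separation inequalities. You write that since $P_1^u(q)=P_2^u(q)$ only at $q_1,q_2$, the branches ``never collide'' on the cut plane, ``so the continuous real-valued function $\Re(P_2^u(q))-\Re(P_1^u(q))$ has constant sign.'' This inference is invalid: $P_1^u\neq P_2^u$ as complex numbers does not prevent $\Re P_1^u=\Re P_2^u$; the two branches could have equal real parts and different imaginary parts at interior points, so your intermediate-value argument does not get off the ground as stated. The paper handles the global inequality $\Re P_1^u(q)\le\Re P_2^u(q)$ differently: on the cuts the two branches are complex conjugate, hence $\lvert\exp P_1^u\rvert=\lvert\exp P_2^u\rvert$ there, and then the maximum modulus principle applied to $\exp P(q)$ on the cut plane (together with the inequality already established on the imaginary axis) forces $\lvert\exp P_1^u\rvert\le\lvert\exp P_2^u\rvert$, i.e.\ $\Re P_1^u\le\Re P_2^u$, everywhere.

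Your Vieta sketch for the imaginary-axis statement is also not sufficient as written: knowing $\Re\bigl(P_1^u(ix)P_2^u(ix)\bigr)\le 0$ does \emph{not} imply $\Re P_1^u(ix)\cdot\Re P_2^u(ix)\le 0$ (the cross term $\Im P_1^u\,\Im P_2^u$ intervenes), so ``one root in each closed half-plane'' does not follow. The paper instead sets $P^u(ix)=\alpha+i\beta$ and separates real and imaginary parts in $U(\alpha+i\beta,ix)=0$; the real part becomes
\[
\theta\alpha^{2}+(\mu_2-\mu_1)\alpha-\Bigl(\theta\beta^{2}+(\sigma_2-\rho)x\beta+\tfrac{\sigma_2}{2}x^{2}\Bigr)=0,
\]
and the bracketed quadratic in $\beta$ is positive for all $\beta,x$ by the ellipticity condition $\rho^2<\sigma_1\sigma_2$, so this quadratic in $\alpha$ has two real roots of opposite sign. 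That is the mechanism that pins $\Re P_1^u(ix)\le 0\le\Re P_2^u(ix)$.
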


\begin{figure}[hbtp]
\centering
\includegraphics[scale=1.5]{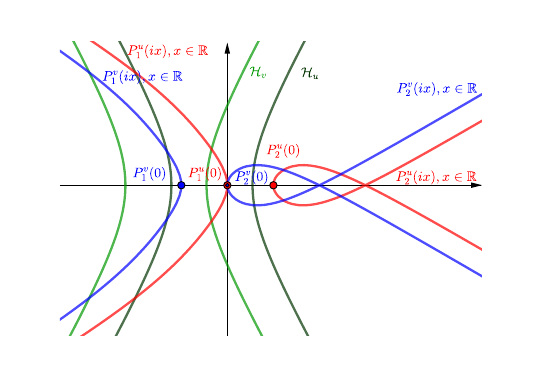}
\caption{Illustration of Lemma~\ref{lem:PuPv} in the case where $\mu_1>\mu_2$: curves $\{ P_1^u(i x) : x\in\mathbb{R}\}$ and $\{ P_2^u(i x) : x\in\mathbb{R}\}$ in red; curves $\{ P_1^v(i x) : x\in\mathbb{R}\}$ and $\{ P_2^v(i x) : x\in\mathbb{R}\}$ in blue; hyperbolas $\mathcal{H}_u$ and $\mathcal{H}_v$ in dark green and light green respectively. Playing with the parameters is possible thanks to the following GeoGebra animation \href{https://www.geogebra.org/m/phvjk35w}{www.geogebra.org/m/phvjk35w}}
\label{fig:dessinlemmePuPv}
\end{figure}

\begin{proof} 
The branch points of $P(q)$ are the zeros of the discriminant of $U(p,q)=0$ viewed as a polynomial in $p$, and equation \eqref{eq:bpP} follows directly. 

In order to prove \eqref{eq:sep-pu}, let $P(q)$ denote the multivalued algebraic function satisfying \eqref{eq:def_U_P_V_P}. Letting $q=ix$ with $x\in\mathbb{R}$ and $P(q)\egaldef\alpha+i\beta$ with real $\alpha,\beta$, then separating real and imaginary parts, we obtain
\begin{equation}\label{eq:sep2}
\begin{cases}
\DD \theta\alpha^2 + (\mu_2-\mu_1)\alpha - \bigl(\theta\beta^2 + (\sigma_2-\rho)x\beta + \frac{\sigma_2}{2}x^2\bigr) =0, 
\\[0.2cm]
\DD \beta(2\theta\alpha +\mu_2-\mu_1) + x\bigl(\alpha(\sigma_2-\rho)+\mu_2\bigr)=0.
 \end{cases}
\end{equation}
Then one checks that the first equation of \eqref{eq:sep2}, viewed as a polynomial in $\alpha$, has two real roots with opposite sign. Indeed, the quadratic polynomial in~$\beta$
\begin{equation*}
\theta\beta^2 + (\sigma_2-\rho)x\beta + \frac{\sigma_2}{2}x^2
\end{equation*}
is always positive, due to the ellipticity condition.

The second property of \eqref{eq:sep-pu} is a direct application of the maximum modulus principle applied to the function $\exp P(q)$. More precisely, we look at the function $\exp P(q)$ on the domain $\mathbb C\setminus ((-\infty,q_1]\cup [q_2,\infty))$. Using the first property of \eqref{eq:sep-pu}, we deduce that for some values of $q$, one has 
\begin{equation}
\label{eq:ineq_P1_P2}
   \bigl\vert \exp P_1(q)\bigr\vert\leq\bigl\vert \exp P_2(q)\bigr\vert.
\end{equation}
On the other hand, on the cut $q\in(-\infty,q_1]\cup [q_2,\infty)$, the branches $P_1(q)$ and $P_2(q)$ are complex conjugate and thus $\bigl\vert \exp P_1(q)\bigr\vert=\bigl\vert \exp P_2(q)\bigr\vert$. Since the cut is the boundary of the cut plane, the maximum modulus principle entails that the inequality \eqref{eq:ineq_P1_P2} holds true globally on $\mathbb C$.

The analytic expression \eqref{eq:hpu} of the hyperbola follows from direct computations, see Lemma \ref{lem:hpq} and its proof for similar computations.

We note the pleasant symmetry of \eqref{eq:bpP} with respect to the parameters. This is mainly due to the change of parameters from $(x,y)$ to $(p,q)$. As it will emerge later, that symmetry plays an important role in our analysis. The proof of the lemma is complete.
\end{proof}

Quite analogous properties hold for $Q^u_i(p)$ and $Q^v_i(p)$, but now the branch points depend on the kernel. They are partially listed in the next lemma, where the equations of the hyperbolas are 
omitted.

\begin{lem}
\label{lem:QuQv}
The functions $Q^u_1(p)$ and $Q^u_2(p)$ are analytic in the complex plane cut along $(-\infty,p_1^u]\cup [p_2^u,\infty)$, where the branch points $p^u_1<0$ and $p^u_2>0$ are the real roots of the equation
\begin{equation}
\label{eq:Qu}
   (\rho^2-\sigma_1\sigma_2)p^2 + 2(\sigma_2\mu_1-\rho\mu_2)p + \mu_2^2 = 0.
\end{equation}
The branches $Q_1^u$ and $Q_2^u$ are separated and satisfy 
\begin{equation}\label{eq:sep-qu}
\begin{cases}
\DD \Re(Q_1^u(ix)) \leq  0  \leq \Re(Q_2^u(ix)),  \quad  \forall x\in \mathbb{R}, \\[0.2cm]
\DD \Re(Q_1^u(p))   \leq  \Re(Q_2^u(p)), \quad \forall p \in \Cc, \\[0.2cm]
\DD Q_1^u(0)= \min\left\{0,\frac{-2\mu_2}{\sigma_2}\right\}, \quad 
Q_2^u(0)= \max\left\{0,\frac{-2\mu_2}{\sigma_2}\right\}.
\end{cases}
\end{equation}
They map  the cut $(-\infty,p_1^u]$ (resp.~$[p_2^u,\infty)$) onto the right branch $\K_+^u$ (resp.\ the left branch $\K_-^u$) of the hyperbola $\K^u$.

Similarly, the functions $Q^v_1(p)$ and $Q^v_2(p)$ are analytic in the complex plane cut along $(-\infty,p_1^v]\cup [p_2^v,\infty)$, where the branch points $p^v_1<0$ and $p^v_2>0$ are the real roots of the equation
\begin{equation}
\label{eq:Qv}
   (\rho^2-\sigma_1\sigma_2)p^2 + 2(\sigma_1\mu_2-\rho\mu_1)p + \mu_1^2 = 0.
\end{equation}
The branches $Q_1^v$ and $Q_2^v$ are separated and satisfy 
\begin{equation}\label{eq:sep-qv}
\begin{cases}
\DD \Re(Q_1^v(ix)) \leq  0  \leq \Re(Q_2^v(ix)),  \quad  \forall x\in \mathbb{R}, \\[0.2cm]
\DD \Re(Q_1^v(p))   \leq  \Re(Q_2^v(p)), \quad \forall p \in \Cc, \\[0.2cm]

\DD Q_1^v(0)= \min\left\{0,\frac{-2\mu_1}{\sigma_1}\right\}, \quad 
Q_2^v(0)= \max\left\{0,\frac{-2\mu_1}{\sigma_1}\right\}.
\end{cases}
\end{equation}
They map  the cut $(-\infty,p_1^v]$ (resp.~$[p_2^v,\infty)$) onto the right branch $\K_+^v$ (resp.\ the left branch $\K_-^v$) of the hyperbola $\K^v$.
\end{lem}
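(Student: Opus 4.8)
The plan is to imitate exactly the proof of Lemma~\ref{lem:PuPv}, exploiting the fact that the kernels $U$ and $V$ are quadratic in each variable separately. For the branch points, I would view $U(p,q)=0$ as a quadratic in $q$: reading off \eqref{eq:Ker1}, the coefficient of $q^2$ is $\sigma_2/2$, the coefficient of $q^1$ is $(\sigma_2-\rho)p+\mu_2$, and the constant term is $\theta p^2+(\mu_2-\mu_1)p$. The branch points $p_1^u,p_2^u$ of $Q^u(p)$ are the zeros of the discriminant of this quadratic, namely $\bigl((\sigma_2-\rho)p+\mu_2\bigr)^2-2\sigma_2\bigl(\theta p^2+(\mu_2-\mu_1)p\bigr)$. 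A direct expansion, using $2\theta=\sigma_1+\sigma_2-2\rho$ so that $(\sigma_2-\rho)^2-\sigma_2\theta=\tfrac12(\rho^2-\sigma_1\sigma_2)$ and $2\mu_2(\sigma_2-\rho)-2\sigma_2(\mu_2-\mu_1)=2(\sigma_2\mu_1-\rho\mu_2)$, yields precisely equation~\eqref{eq:Qu}; the ellipticity condition $\rho^2-\sigma_1\sigma_2<0$ makes the leading coefficient negative, and one checks (e.g.\ evaluating at $p=0$ where the value is $\mu_2^2>0$, or noting the product of roots is $\mu_2^2/(\rho^2-\sigma_1\sigma_2)<0$) that there is exactly one negative root $p_1^u$ and one positive root $p_2^u$; hence $Q_1^u,Q_2^u$ are analytic off $(-\infty,p_1^u]\cup[p_2^u,\infty)$. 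The computation for $V$ is identical after swapping $\sigma_1\leftrightarrow\sigma_2$ and $\mu_1\leftrightarrow\mu_2$, giving \eqref{eq:Qv}.

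For the separation of the two branches I would reproduce verbatim the two-step argument of Lemma~\ref{lem:PuPv}. First, set $p=ix$, $x\in\mathbb R$, write $Q(ix)=\alpha+i\beta$, and split $U(ix,\alpha+i\beta)=0$ into real and imaginary parts; the real part, read as a quadratic in $\alpha$, has constant term $-\bigl(\tfrac{\sigma_2}{2}\beta^2+(\text{something})x\beta+\theta x^2\bigr)$, which is negative because that bracket is a positive-definite quadratic form in $(\beta,x)$ by ellipticity — so the two roots $\alpha$ have opposite signs, giving $\Re(Q_1^u(ix))\le 0\le \Re(Q_2^u(ix))$. Second, to upgrade this to $\Re(Q_1^u(p))\le\Re(Q_2^u(p))$ for all $p\in\mathbb C$, I apply the maximum modulus principle to $\exp Q(p)$ on the cut plane $\mathbb C\setminus((-\infty,p_1^u]\cup[p_2^u,\infty))$: on the cuts the two branches are complex conjugate so $|\exp Q_1^u|=|\exp Q_2^u|$ there, and the inequality on the imaginary axis shows it holds somewhere in the domain, so it propagates globally. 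The special values $Q_i^u(0)$ come from solving $U(0,q)=\tfrac{\sigma_2}{2}q^2+\mu_2 q=0$, i.e.\ $q\in\{0,-2\mu_2/\sigma_2\}$, and assigning the smaller to $Q_1^u$ and the larger to $Q_2^u$ consistently with the separation just established.

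For the hyperbola statement, I would parametrize the cut $p\in(-\infty,p_1^u]$, note that on it $Q^u$ takes complex-conjugate values $\alpha\pm i\beta$, eliminate the real parameter $p$ between the real and imaginary parts of $U(p,\alpha+i\beta)=0$ (exactly the two equations of the analogue of \eqref{eq:sep2}), and observe that the resulting relation between $\alpha$ and $\beta$ is a conic; checking the sign of its discriminant — again controlled by $\rho^2-\sigma_1\sigma_2<0$ — shows it is a hyperbola $\K^u$, with the two cuts mapping to its two branches $\K_+^u$ and $\K_-^u$. Since the lemma explicitly omits the equations of these hyperbolas, this last part is only a qualitative claim and requires no messy closed form. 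The only genuine obstacle is bookkeeping: making sure the opposite-sign-roots argument for the real part is correctly set up (that the relevant bracket really is a definite form) and that the branch labels $Q_1,Q_2$ are assigned consistently across the imaginary axis, the point $0$, and the two cuts; all of this is handled by the ellipticity condition exactly as in Lemma~\ref{lem:PuPv}, so no new idea is needed.
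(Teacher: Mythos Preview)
Your proposal is correct and follows exactly the approach the paper intends: the paper does not give a separate proof of this lemma but treats it as a direct transcription of the proof of Lemma~\ref{lem:PuPv} with the roles of $p$ and $q$ swapped, and that is precisely what you do (discriminant computation, the opposite-sign-roots argument on the imaginary axis via ellipticity, the maximum modulus principle applied to $\exp Q(p)$, and elimination of the real parameter on the cuts to get a conic). One harmless slip: your intermediate identity should read $(\sigma_2-\rho)^2-2\sigma_2\theta=\rho^2-\sigma_1\sigma_2$ (not $(\sigma_2-\rho)^2-\sigma_2\theta=\tfrac12(\rho^2-\sigma_1\sigma_2)$), but the final discriminant and all conclusions are unaffected.
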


It is worth remarking at once that, by using \eqref{eq:sys3}, \eqref{eq:sys4} and Lemma~\ref{lem:PuPv}, one can set \emph{two  boundary value problems}  for the couple of functions $[\ell_1(p),\ell_2(p)]$ on the respective hyperbolas~$\H_+^u$ and $\H_+^v$.

\subsection{Meromorphic continuation to the complex plane}

The method relies on an iterative algorithm, as in \cite[Chap.~10]{FIM-2017}, and the following theorem holds. Below and throughout, if $\H_\pm$ denotes a branch of hyperbola as on Figure \ref{fig:dessinlemmePuPv}, $\H_{\pm,\textnormal{int}}$ will represent the left connected component of $\mathbb C\setminus \H_{\pm}$. 

\begin{thm}
\label{thm:cont}
The functions $m$, $n$, $\ell_1$ and $\ell_2$ can be continued as meromorphic functions to the whole complex plane cut along proper positive real half-lines in their respective planes. The number of poles is finite, and the possible poles of $m$ and $n$ inside the domain $\K_{-,\textnormal{int}}^u \bigcup \H_{-,\textnormal{int}}^v$ coincide.
\end{thm}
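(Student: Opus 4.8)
The plan is to run the iterative lifting scheme of \cite[Chap.~10]{FIM-2017}, adapted to the present two-kernel system \eqref{eq:sys3}--\eqref{eq:sys4}. First I would record the a priori domains: from the integral representations, $L_1$ and $L_2$ are holomorphic on $\{\Re p<0\}\times\{\Re q<0\}$ and continuous on $\{\Re p\leq 0\}\times\{\Re q\leq 0\}$, $m$ and $n$ are holomorphic on $\{\Re q<0\}$, and $\ell_1,\ell_2$ are holomorphic on $\{\Re p<0\}$, all with continuous boundary values on the imaginary axes. The driving identities are obtained by restricting \eqref{eq:sys3} to the algebraic curve $U=0$ and \eqref{eq:sys4} to $V=0$, where the terms $UL_1$ and $VL_2$ vanish. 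Over the $q$-plane (Lemma~\ref{lem:PuPv}) this yields, for $i=1,2$,
\begin{equation*}
A(P_i^u(q),q)\,m(q)+\theta\, n(q)+C(P_i^u(q),q)\,\ell_1(P_i^u(q))+E=0
\end{equation*}
and
\begin{equation*}
B(P_i^v(q),q)\,m(q)-\theta\, n(q)+D(P_i^v(q),q)\,\ell_2(P_i^v(q))-E=0,
\end{equation*}
and over the $p$-plane (Lemma~\ref{lem:QuQv}) the symmetric identities with $q$ replaced by $Q_i^u(p)$ or $Q_i^v(p)$, the isolated terms now being $C(p,Q_i^u(p))\,\ell_1(p)$ and $D(p,Q_i^v(p))\,\ell_2(p)$. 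By the separation properties \eqref{eq:sep-pu}--\eqref{eq:sep-qv}, the branch $i=1$ keeps $(p,q)$ inside the a priori domains on a one-sided neighbourhood of the relevant imaginary axis; all these identities hold there, as relations between boundary values, and will be propagated by analytic continuation.

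Next I would run the lifting. Using the explicit form of $A$ one has $A(P_1^u(q),q)-A(P_2^u(q),q)=\theta\bigl(P_1^u(q)-P_2^u(q)\bigr)$, so combining the two $q$-branch $U$-identities expresses $m(q)$, and then $n(q)$, as an explicit rational function of $\ell_1(P_1^u(q))$ and $\ell_1(P_2^u(q))$; similarly with $V$; and adding one $U$-identity to one $V$-identity cancels $\theta n$, giving, with $A(P_1^u(q),q)+B(P_1^v(q),q)=\theta\bigl(P_1^u(q)+P_1^v(q)+q\bigr)$, the relation $\theta\bigl(P_1^u(q)+P_1^v(q)+q\bigr)m(q)=-C(P_1^u(q),q)\ell_1(P_1^u(q))-D(P_1^v(q),q)\ell_2(P_1^v(q))$. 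Symmetrically, the $p$-branch identities continue $\ell_1$ and $\ell_2$ in terms of $m,n$ composed with $Q_i^u,Q_i^v$. Starting from the a priori domains and alternating the two kinds of lifting — all the while using the descriptions in Lemmas~\ref{lem:PuPv} and \ref{lem:QuQv} of the images of the branch maps as regions bounded by the hyperbolas $\H^u,\H^v,\K^u,\K^v$ — one enlarges the holomorphy domains of the four functions. A covering argument as in \cite[Chap.~10]{FIM-2017} shows that the union of the resulting increasing sequence of domains is $\C$ minus suitable half-lines of the positive real axis in the corresponding ($q$- or $p$-) plane, namely the images of the branch cuts $[q_2,\infty)$, $[p_2^u,\infty)$, $[p_2^v,\infty)$ under the branch maps and their iterates, which are the announced slits.

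It remains to locate the singularities. At each step the continued function is produced by dividing a holomorphic expression by one of the algebraic coefficients $C(P_i^u(q),q)$, $\theta(P_1^u(q)-P_2^u(q))$, $\theta(P_1^u(q)+P_1^v(q)+q)$, $C(p,Q_i^u(p))$, or a $v$-counterpart; none of these is identically zero, so each has finitely many zeros in the cut plane, and consequently only finitely many poles are introduced, whence meromorphy with a finite number of poles. For the last assertion, inside $\K_{-,\textnormal{int}}^u\bigcup\H_{-,\textnormal{int}}^v$ the values $P_1^u(q)$ and $P_1^v(q)$ still belong to the half-planes where $\ell_1$ and $\ell_2$ are holomorphic, so $\ell_1(P_1^u(q))$ and $\ell_2(P_1^v(q))$ are holomorphic functions of $q$ there. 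The identity $\theta\, n(q)=-A(P_1^u(q),q)\,m(q)-C(P_1^u(q),q)\,\ell_1(P_1^u(q))-E$ then shows that $n(q)$ and $A(P_1^u(q),q)\,m(q)$ differ by a function holomorphic on that domain, hence have the same poles there; a short check that $A(P_1^u(q),q)$ does not vanish at these poles, using the symmetric $V$-identity to dispose of the finitely many exceptional points, yields the claimed coincidence of the possible poles of $m$ and $n$ in this domain.

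The main difficulty is not a single computation but the global bookkeeping of the iteration: one must prove that interleaving the two kernels $U$ and $V$ and iterating the four branch maps actually sweeps out all of $\C$ minus the claimed slits, with no accumulation of poles and no spurious branch point arising from a denominator vanishing on the boundary of an intermediate domain. Pinning down the mutual position of the hyperbolas $\H^u,\H^v,\K^u,\K^v$ and of the images $P_i^u(\C\setminus\text{cut})$, $Q_i^u(\C\setminus\text{cut})$, etc., and verifying the combinatorics of the iteration, is the technical heart; this is where the symmetry noted after Lemma~\ref{lem:PuPv} — the branch points $q_1,q_2$ are common to $U$ and $V$ — becomes essential, since it is what lets both kernels be treated from one and the same initial domain.
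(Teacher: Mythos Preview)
Your approach is exactly the one the paper points to, but you should know that the paper itself gives \emph{no} proof of this theorem: it merely states ``The method relies on an iterative algorithm, as in \cite[Chap.~10]{FIM-2017}, and the following theorem holds'' and then moves on directly to the vectorial BVP in Section~\ref{sec:VBVP}. So there is nothing to compare against beyond that one-line reference; your sketch of the two-kernel lifting scheme, the use of the separation properties \eqref{eq:sep-pu}--\eqref{eq:sep-qv}, and the pole-coincidence argument via the identity relating $m$ and $n$ through $\ell_1(P_1^u(q))$ is precisely the content the paper delegates to \cite{FIM-2017}, and is in fact more detailed than anything the paper provides.
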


\subsection{Reduction to a vectorial Hilbert boundary value problem} \label{sec:VBVP}

For all $q\in(-\infty,q_1]$, Equations \eqref{eq:sys3} and \eqref{eq:sys4} yield the linear system
\begin{equation*}
\begin{cases}
A(P_1^u(q),q) m(q) + \theta n(q) + C(P_1^u(q),q) \ell_1(P_1^u(q)) + E = 0, 
\\[0.2cm]
B(P_1^v(q),q) m(q) - \theta n(q) + D(P_1^v(q),q) \ell_2(P_1^v(q)) - E = 0,
\end{cases}
\end{equation*}
which  in turn gives
\begin{equation}\label{eq:mn}
\begin{cases}
 m(q) &\hspace{-2mm}=\  \DD \frac{C(P_1^u(q),q) \ell_1(P_1^u(q)) + D(P_1^v(q),q) \ell_2(P_1^v(q))}{\Delta(q)}, \\[0.3cm]
n(q) &\hspace{-2mm}=\  \DD \frac{B(P_1^v(q),q) C(P_1^u(q),q)\ell_1(P_1^u(q)) - 
 A(P_1^u(q),q) D(P_1^v(q),q) \ell_2 (P_1^v(q))}{\theta\Delta(q)} - \frac{E}{\theta},
\end{cases}
\end{equation}
where 
\begin{equation}\label{eq:delta}
\Delta(q) \egaldef  -  \bigl(A(P_1^u(q),q) + B(P_1^v(q),q)\bigr) =
-\theta\bigl(q+ P_1^u(q) + P_1^v(q)\bigr).
\end{equation}
Now, by using the continuity of the left-hand side of the system \eqref{eq:mn} when $q$ traverses the cut $(-\infty,q_1]$, we can set a two-dimensional homogeneous Hilbert boundary value problem for the vector $[\ell_1,\ell_2]$. More precisely, we first deduce from \eqref{eq:mn} the two following relations, which hold for all $q\in(-\infty,q_1]$:
\begin{multline}
\label{eq:bvpm}
\frac{C(P_1^u(q),q) \ell_1(P_1^u(q)) + D(P_1^v(q),q) \ell_2(P_1^v(q))}{\Delta(q)} =\\
\frac{C(\overline{P_1^u(q)},q) \ell_1(\overline{P_1^u(q)}) + D(\overline{P_1^v(q)},q) \ell_2(\overline{P_1^v(q)})}{\overline{\Delta(q)}},
\end{multline}
and
\begin{multline}
 \lefteqn{\frac{C(P_1^u(q),q) B(P_1^v(q),q)\ell_1(P_1^u(q)) -  A(P_1^u(q),q) D(P_1^v(q),q) \ell_2 (P_1^v(q))}
 {\Delta(q)}} \\
 = \frac{B(\overline{P_1^v(q)},q) C(\overline{P_1^u(q)},q)\ell_1(\overline{P_1^u(q)}) -  
 A(\overline{P_1^u(q)},q) D(\overline{P_1^v(q)},q) \ell_2 (\overline{P_1^v(q)})}
 {\overline{\Delta(q)}}. \label{eq:bvpn}
\end{multline}

Introducing the vector $L(q)\egaldef [\ell_1(P_1^u(q)), \ell_2(P_1^v(q))]$ and the $2\times2$-matrix
\begin{equation}
   G(q)\egaldef \frac{1}{\overline{\Delta(q)}}
\begin{pmatrix}\label{eq:mat}
&\DD \frac{-\bar{\gamma}(\alpha+\bar{\beta)}}{\gamma}
&\DD \frac{\bar{\delta}(\bar{\alpha}-\alpha)}{\gamma}&\\[0.2cm]
&\DD \frac{\bar{\gamma}(\bar{\beta}-\beta)}{\delta}
&\DD \frac{-\bar{\delta}(\beta+\bar{\alpha})}{\delta}&
\end{pmatrix},
\end{equation}
with
\begin{equation}
\label{eq:notation_matrix_G}
   \alpha = A(P_1^u(q),q), \quad \beta = B(P_1^v(q),q), \quad \gamma  =  C(P_1^u(q),q), \quad 
\delta =  D(P_1^v(q),q),
\end{equation}
the system \eqref{eq:bvpm}--\eqref{eq:bvpn} immediately yields the following result:
\begin{thm}
\label{thm:vectorial_BVP}
We have
\begin{equation*} 
L^+(q)=G(q)L^-(q), \quad \forall q\in (-\infty,q_1],
\end{equation*}
where $L^+(q)$ (resp.\ $L^-(q)$) is the limit of $L(q)$  when $q$ reaches the cut from below (resp.\ from above) in the complex plane.
\end{thm}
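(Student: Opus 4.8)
The plan is to obtain the jump relation directly from the representation \eqref{eq:mn} of $m$ and $n$, combined with the fact that $m$ and $n$ are single-valued across the half-line $(-\infty,q_1]$.

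First I would gather the two facts already available. By Theorem~\ref{thm:cont}, the functions $m$ and $n$ extend meromorphically through $(-\infty,q_1]$ (all their cuts lie on positive real half-lines), so their boundary values from above and from below this segment agree. By Lemma~\ref{lem:PuPv}, on $(-\infty,q_1]$ the branches $P_1^u$ and $P_2^u$ (resp.\ $P_1^v$ and $P_2^v$) are complex conjugate; equivalently, the boundary value of $P_1^u(q)$ reached from one side of the cut is the complex conjugate $\overline{P_1^u(q)}$ of the value reached from the other side, and similarly for $P_1^v$. Since $A,B,C,D$ and $\Delta$ are polynomials with real coefficients evaluated at real $q$, it follows that $\alpha,\beta,\gamma,\delta$ (notation~\eqref{eq:notation_matrix_G}) and $\Delta(q)$ are each replaced by their complex conjugates when $q$ crosses the cut, while $\ell_1(P_1^u(q))$ and $\ell_2(P_1^v(q))$ become $\ell_1(\overline{P_1^u(q)})$ and $\ell_2(\overline{P_1^v(q)})$.

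Next I would insert this into \eqref{eq:mn}: equating the two one-sided limits of $m(q)$ gives \eqref{eq:bvpm}, and equating the two one-sided limits of $n(q)$ gives \eqref{eq:bvpn} (the additive constant $-E/\theta$ is identical on both sides and disappears). Writing $\ell_1^\pm,\ell_2^\pm$ for the components of $L^\pm(q)$, with the shorthand of \eqref{eq:notation_matrix_G} standing for the quantities relative to $L^+(q)$ and $\overline\alpha,\overline\beta,\overline\gamma,\overline\delta$ for those relative to $L^-(q)$, relations \eqref{eq:bvpm}--\eqref{eq:bvpn} become, after clearing $\Delta(q)$, the linear system
\[
\begin{cases}
\gamma\,\ell_1^+ + \delta\,\ell_2^+ = \dfrac{\Delta}{\overline\Delta}\bigl(\overline\gamma\,\ell_1^- + \overline\delta\,\ell_2^-\bigr),\\[0.35cm]
\beta\gamma\,\ell_1^+ - \alpha\delta\,\ell_2^+ = \dfrac{\Delta}{\overline\Delta}\bigl(\overline\beta\,\overline\gamma\,\ell_1^- - \overline\alpha\,\overline\delta\,\ell_2^-\bigr).
\end{cases}
\]
The coefficient matrix $\left(\begin{smallmatrix}\gamma & \delta\\ \beta\gamma & -\alpha\delta\end{smallmatrix}\right)$ has determinant $-\gamma\delta(\alpha+\beta)$, which equals $\gamma\delta\,\Delta(q)$ by \eqref{eq:delta} and is nonzero off a finite subset of the cut. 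Solving by Cramer's rule and using $\alpha+\beta=-\Delta(q)$ to absorb the factor $\Delta/\overline\Delta$ yields precisely $L^+(q)=G(q)L^-(q)$ with $G(q)$ as in \eqref{eq:mat}; the finitely many points where $\gamma\delta\Delta$ vanishes are recovered by continuity.

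Given the machinery established earlier, the argument is short, and the genuinely delicate inputs (meromorphic continuation, Theorem~\ref{thm:cont}; the conjugation property of the branches, Lemma~\ref{lem:PuPv}) are already in place. The points that still require a little care are: (i) correctly matching the from-below and from-above boundary values of $P_1^u$ and $P_1^v$ to $L^+$ and $L^-$, so that the two sides of \eqref{eq:bvpm}--\eqref{eq:bvpn} are assigned to the right vectors; and (ii) verifying that $\gamma\delta\Delta$ does not vanish identically on $(-\infty,q_1]$, so that the $2\times2$ system is genuinely invertible — both follow at once from the explicit expressions \eqref{eq:Ker1}, \eqref{eq:delta} and Lemma~\ref{lem:PuPv}. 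Thus I do not anticipate a real obstacle beyond bookkeeping.
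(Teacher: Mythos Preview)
Your argument is correct and follows exactly the route taken in the paper: the equalities \eqref{eq:bvpm}--\eqref{eq:bvpn} are obtained from the single-valuedness of $m$ and $n$ across $(-\infty,q_1]$ together with the conjugation property of the branches, and the matrix relation $L^+=G L^-$ is then read off by inverting the resulting $2\times2$ linear system. The only addition you make is spelling out the Cramer inversion, which the paper leaves implicit.
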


\begin{rem}
With the notation \eqref{eq:notation_matrix_G}, the determinant of the matrix in \eqref{eq:mat} can be rewritten as 
\begin{equation*}
   \frac{\overline{\gamma \delta}}{\gamma \delta}\frac{\alpha+\beta}{\overline{\alpha+\beta}}.
\end{equation*}
 Its modulus is one, and it is interesting to ask whether this fact could be anticipated. 
\end{rem}

Let us denote by $\omega_u$ the conformal mapping of $\H_{+,\textnormal{int}}^u$ onto the unit disk $\D$. Then $\omega_u$ is analytic in $\H_{+,\textnormal{int}}^u$, its inverse function $\omega_u^{-1}$ is analytic in $\D$, and we have
\begin{equation*}
   \vert \omega_u(p)\vert= 1, \quad \forall p\in\H_+^u.
\end{equation*}
Actually, $\omega_u$ has a known explicit form (see, e.g., Chapter~6 in \cite{Ne-75}). Moreover, by symmetry, one can choose
$\omega_u(\overline{p})=\overline{\omega_u(p)},\ \forall p\in\H_+^u$, so that
\begin{equation*}
\omega_u(\overline{p}) = \frac{1}{\omega_u(p)}.
\end{equation*}
In other words, for $\vert z\vert=1$, we have $\overline{z}=1/z$  
and $\overline{p}=\omega^{-1}_u(1/z)$. Similar definitions hold by exchanging the roles of $u$ and $v$. 

Then, setting
\begin{equation*}
\varPhi^+(z)\egaldef [\ell_1(\omega_u^{-1}(z)), \ell_2(\omega_v^{-1}(z))], \quad \forall z\in\D,
\end{equation*}
and 
\begin{equation*}
\varPhi^-(z)\egaldef \varPhi^+(1/z),  \quad \forall \vert z\vert >1,
\end{equation*}
we obtain the boundary condition 
\begin{equation}\label{eq:VBC}
   \varPhi^+(z)= H(z)\varPhi^-(z), \quad \forall \vert z\vert=1,
\end{equation}
where $H(z)$ is the $2\times2$ matrix directly derived from $G(q)$, given in~\eqref{eq:mat}, by using the functions $\omega_u(p)$ and $\omega_v(p)$.
The problem can now be formulated as follows: 
\begin{quote} \emph{Find a sectionally meromorphic vector $\varPhi(z)$, constant at infinity, equal to 
 $\varPhi^+(z)$ (resp.\ $\varPhi^-(z)$) for $z\in\D$ (resp.\ for $z\notin\D$), and which satisfies the boundary condition \eqref{eq:VBC}.}
 \end{quote}
 
\subsection{On the solvability of the vectorial boundary value problem \eqref{eq:VBC}}
It is natural to ask whether the boundary value problem \eqref{eq:VBC} may be solved in closed form. As a matter of comparison, scalar (i.e., one-dimensional) boundary value problems may be solved in terms of contour integrals, involving conformal mappings or uniformization techniques. This is the situation encountered in the convex case \cite{BaFa-87,FrRa-19} as well as in the non-convex symmetric case, as shown in the following Section~\ref{sec:symmetric_case}. However, vectorial boundary value problems are in general hardly solvable in closed form \cite{MUS,Vekua-1967,FaRa-15}. 

 Here, after eliminating the possible poles of $\varPhi$  inside the unit disk, the solution to \eqref{eq:VBC} is shown to be directly connected with  the Fredholm integral equation (see, e.g., \cite{MUS,Vekua-1967})
\begin{equation}\label{eq:equaint}
   \varPhi^-(z_0) -\frac{1}{2\pi}\int_{\vert z\vert =1} \frac{H^{-1}(z_0)H(z) - \mathcal{I}}{z-z_0} \varPhi^-(z)\mathrm{d}z = \varPhi^-(\infty),
\end{equation}
where $\mathcal{I}$ stands for the identity matrix. Since all elements of the matrix $H(z)$ are explicitly known, we can express the formal solution of the BVP \eqref{eq:VBC} as a convergent matrix power series
from \eqref{eq:equaint}.

Let us do three additional remarks.
\begin{itemize}
  \item To the best of our knowledge, the only asymmetric case which admits a density in closed form is the one mentioned at the end of the introduction, with explicit formula \eqref{eq:expression_remarkable_density}, see Figure~\ref{fig:Franceschi_condition2}. This example, which works for any opening angle $\beta\in(0,2\pi)$, is not obtained as a consequence of the vectorial problem \eqref{eq:VBC}, but rather from an analogy with the convex case studied in \cite{BoElFrHaRa-21}. However, by a direct (but tedious) algebra, it can be checked a posteriori that the vectorial boundary value problem \eqref{eq:VBC} is satisfied by the solution \eqref{eq:expression_remarkable_density}.
   \item The solvability of \eqref{eq:VBC} should be strongly related to potential nice factorizations of the matrix $H(z)$. For example, in case the matrix $H(z)$ could be written as the product of matrices $\Psi^+(z)^{-1}\Psi^-(z)$, with $\Psi$ sectionally meromorphic on the complex plane cut along the unit circle, then \eqref{eq:VBC} could be rewritten as the homogeneous problem $(\Psi\Phi)^+(z)= (\Psi\Phi)^-(z)$, which is solvable. Finding such factorizations appears as a kind of vectorial Tutte's invariant method, in the terminology of \cite{franceschi_tuttes_2016,BoElFrHaRa-21}.
   \item In the symmetric case, the vectorial problem becomes solvable, as we will see in the next Section~\ref{sec:symmetric_case}. On the other hand, in the non-symmetric case, our work appeals further developments. In this respect, an interesting intermediate semi-symmetrical situation takes place when $\mu_1=\mu_2$, $\sigma_1=\sigma_2$, but $r_1\neq r_2$, which should lead  to some reasonably explicit results.
\end{itemize}

 
\section{The symmetric case}
\label{sec:symmetric_case}

When the model is symmetric, we shall put
\begin{equation*}
   \mu\egaldef\mu_1=\mu_2, \quad \sigma \egaldef\sigma_1=\sigma_2 \quad \text{and}\quad r\egaldef r_1=r_2.
\end{equation*}
The invariant measure is symmetric w.r.t.\ the diagonal $z_1=z_2$. Consequently, we have $\pi (z_1,z_2)=\pi(z_2,z_1)$, which yields $n(x+y)=0$, see \eqref{eq:formula_Laplace_transform_n}.

\begin{figure}[hbtp]
\centering
\includegraphics[scale=0.4]{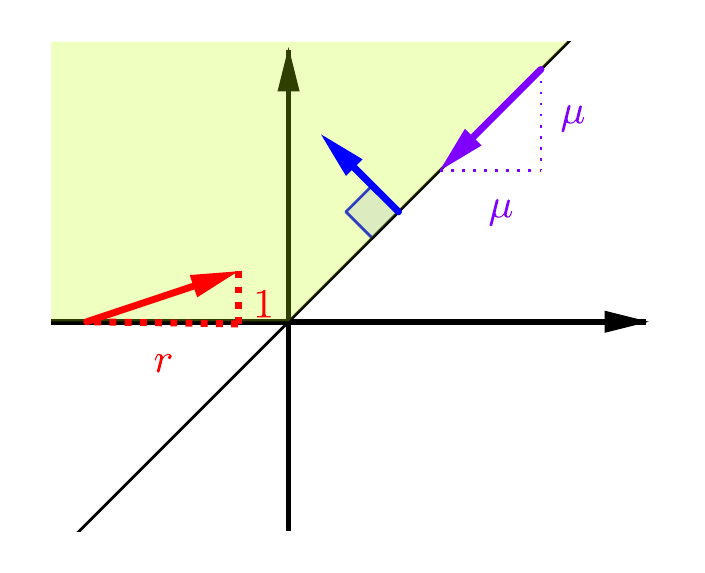}
\includegraphics[scale=0.4]{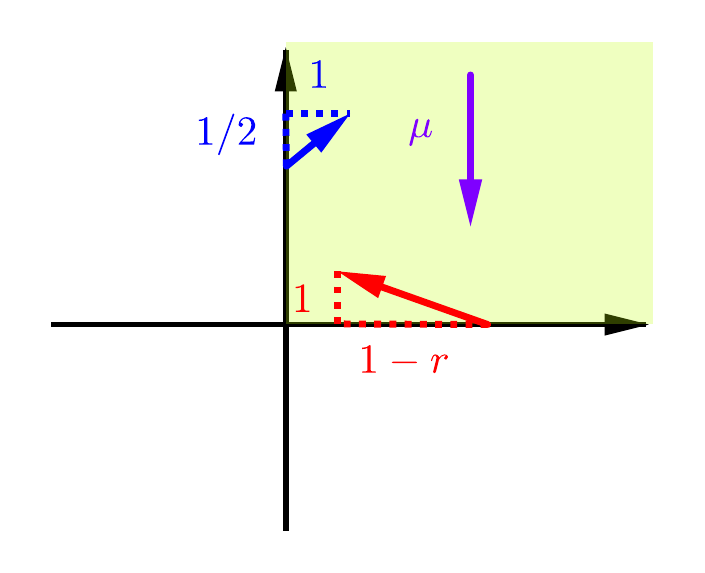}
\includegraphics[scale=0.4]{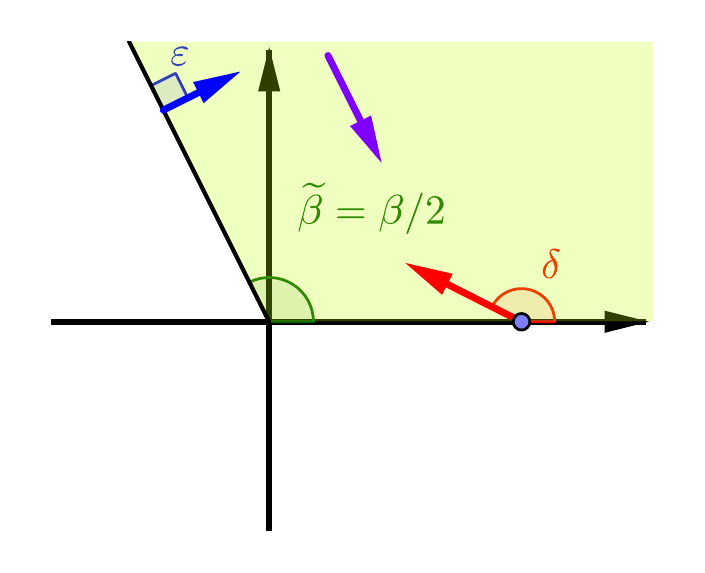}
\caption{In the symmetric case, Brownian motion in a three-quarter plane can be reduced to a more standard reflected Brownian motion in a convex cone. More precisely, its projection in $S_1$ defines a Brownian motion $\widehat{Z}$ in a wedge of opening $3\pi/8$. The left picture above represents the drift and reflection vectors of $\widetilde{Z}$, to be studied in Section~\ref{sec:3/8}. After a first change of variables, it becomes a Brownian motion $\widetilde{Z}$ in the quarter plane, as studied in Section~\ref{sec:3/4}, see the middle picture. On the right, the model is mapped to a $\beta/2$-cone through a linear transform, so as to admit an identity covariance matrix: this last model will be denoted by $T\widetilde{Z}$, see Section~\ref{sec:beta}.}
\label{fig:symmetric_case_change_variable}
\end{figure}

\subsection{Reformulation as a reflected Brownian motion in a $3/8$ plane}
\label{sec:3/8}

Let $\widehat{Z}_t$ be the reflected process of $Z_t$ along the diagonal defined by
\begin{equation*}
\widehat{Z}_t \egaldef (\widehat{Z}_t^1,\widehat{Z}_t^2)  \egaldef
\frac{1}{2}(Z_t^1+Z_t^2-\vert Z_t^2-Z_t^1\vert ,Z_t^1+Z_t^2+\vert Z_t^2-Z_t^1\vert )
 =
\begin{cases}
(Z_t^1,Z_t^2) & \text{if } Z_t \in S_1 ,
\\
(Z_t^2,Z_t^1) & \text{if } Z_t \in S_2  .
\end{cases}
\end{equation*}
As the following result will establish, the process $\widehat{Z}_t$ is a standard reflected Brownian motion in the convex cone $S_1$, with reflection vector $(r,1)$ on the horizontal axis and an orthogonal reflection on the diagonal, see Figure~\ref{fig:symmetric_case_change_variable} (left). We also provide a semimartingale decomposition of this reflected process.
\begin{lem}
\label{lem:semimart38}
In the symmetrical case, we have
\begin{equation*}
\begin{cases}
\widehat{Z}_t^1=\widehat{Z}_0^1+ \widehat{W}_t^1+ \mu t + r \widehat{L}_t^1 - \frac{1}{2}\widehat{L}_t^2,
\\
\widehat{Z}_t^2=\widehat{Z}_0^2+ \widehat{W}_t^2+ \mu t+ \widehat{L}_t^1 +  \frac{1}{2}\widehat{L}_t^2,
\end{cases}
\end{equation*}
where $\widehat{W}_t$ is a Brownian motion with the same covariance matrix as $W_t$, $\widehat{L}_t^2$ is the local time of $\widehat{Z}_t$ on the diagonal, and $\widehat{L}_t^1= L_t^1 + L_t^2$ is the local time of $\widehat{Z}_t$ on the horizontal axis. We deduce that $\widehat{Z}$ is a reflected Brownian motion in a $3/8$-plane, with reflection vector $(r,1)$ on the horizontal axis and an orthogonal reflection on the diagonal.
\end{lem}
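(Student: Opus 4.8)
The plan is to start from the semimartingale representation \eqref{eq:defZt} of $Z_t$ and to ``fold'' it along the diagonal by means of Tanaka's formula. Writing $Y_t \egaldef Z_t^2 - Z_t^1$, one has $\widehat{Z}_t^1 = \tfrac12\bigl(Z_t^1 + Z_t^2 - |Y_t|\bigr)$ and $\widehat{Z}_t^2 = \tfrac12\bigl(Z_t^1 + Z_t^2 + |Y_t|\bigr)$; in particular $\widehat{Z}$ stays in $\overline{S_1}$, since $\widehat{Z}^2 - \widehat{Z}^1 = |Y_t| \geq 0$ and $\widehat{Z}^2 = \max(Z^1,Z^2) \geq 0$ because $Z \in S$. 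In the symmetric case, \eqref{eq:defZt} gives at once $Z_t^1 + Z_t^2 = Z_0^1 + Z_0^2 + (W_t^1 + W_t^2) + 2\mu t + (r+1)(L_t^1 + L_t^2)$ and $Y_t = Y_0 + (W_t^2 - W_t^1) + (1-r)(L_t^1 - L_t^2)$, the drift term disappearing because $\mu_1 = \mu_2$. The next ingredient is that $dL^1$ is carried by the negative abscissa $\{Z^2 = 0,\, Z^1 \leq 0\}$, on which $Y = -Z^1 \geq 0$, and $dL^2$ by the negative ordinate $\{Z^1 = 0,\, Z^2 \leq 0\}$, on which $Y = Z^2 \leq 0$; since the origin is not attained, this yields $\int_0^t \sgn(Y_s)\,dL_s^1 = L_t^1$ and $\int_0^t \sgn(Y_s)\,dL_s^2 = -L_t^2$. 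Applying Tanaka's formula to $|Y_t|$ then gives $|Y_t| = |Y_0| + \int_0^t \sgn(Y_s)\,d(W_s^2 - W_s^1) + (1-r)(L_t^1 + L_t^2) + \widehat{L}_t^2$, where $\widehat{L}_t^2$ is the (nonnegative, nondecreasing) local time of $Y$ at $0$.

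Setting $\widehat{L}_t^1 \egaldef L_t^1 + L_t^2$ and plugging the last two displays into the expressions for $\widehat{Z}_t^1$ and $\widehat{Z}_t^2$, a direct computation — using $\tfrac12(r+1) - \tfrac12(1-r) = r$ and $\tfrac12(r+1) + \tfrac12(1-r) = 1$ — produces exactly the two identities of the statement, with $\widehat{W}_t^1 \egaldef \tfrac12(W_t^1 + W_t^2) - \tfrac12\int_0^t \sgn(Y_s)\,d(W_s^2 - W_s^1)$ and $\widehat{W}_t^2 \egaldef \tfrac12(W_t^1 + W_t^2) + \tfrac12\int_0^t \sgn(Y_s)\,d(W_s^2 - W_s^1)$. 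It then remains to identify these objects. For $\widehat{W}$, I would observe that $d\widehat{W}_t = (dW_t^1, dW_t^2)$ on $\{Y_t > 0\}$ and $d\widehat{W}_t = (dW_t^2, dW_t^1)$ on $\{Y_t < 0\}$, so that $\widehat{W}$ is a continuous local martingale started at $0$ with $\langle \widehat{W}^i, \widehat{W}^j\rangle_t = \Sigma_{ij}\,t$ — this is precisely where the hypothesis $\sigma_1 = \sigma_2$ (together with $\{s : Y_s = 0\}$ having zero Lebesgue measure) is used — hence a Brownian motion of covariance $\Sigma$ by Lévy's characterisation. For the local times, $\widehat{L}^2$ increases only on $\{Y = 0\} = \{\widehat{Z}^1 = \widehat{Z}^2\}$, i.e.\ on the diagonal, while $\widehat{L}^1$ increases only on $\mathrm{supp}(dL^1) \cup \mathrm{supp}(dL^2) \subseteq \{\widehat{Z}^2 = 0\}$, i.e.\ on the horizontal edge of $S_1$; the direction attached to $\widehat{L}^1$ is $(r,1) = R_1$, and the one attached to $\widehat{L}^2$ is $(-\tfrac12, \tfrac12)$, which is orthogonal to the diagonal direction $(1,1)$. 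Finally, since $\widehat{Z}$ lives in the convex cone $\overline{S_1}$ and $(\widehat{Z}, \widehat{W}, \widehat{L}^1, \widehat{L}^2)$ satisfies the defining Skorokhod conditions of an obliquely reflected Brownian motion in $S_1$ with the above data, uniqueness in law of such a process (as in \cite{Wi-85}) lets us conclude that $\widehat{Z}$ is a reflected Brownian motion in the $3/8$-plane with reflection $(r,1)$ on the horizontal axis and orthogonal reflection on the diagonal.

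The bulk of the argument is routine, but I expect two points to need care. First, the local-time bookkeeping: making the identities $\int_0^t \sgn(Y_s)\,dL_s^i = \pm L_t^i$ rigorous, which rests on the fact that $dL^i$ does not charge the corner of $S$ — a property that should be derived from the non-attainability of the origin, consistent with the standing ``avoiding a quarter plane'' hypothesis — and on fixing a convention for $\sgn(0)$ compatible with the chosen version of Tanaka's formula (or using the symmetric local time to sidestep this). Second, and more conceptually, promoting the computed decomposition to the genuine assertion ``$\widehat{Z}$ is \emph{the} reflected Brownian motion in the $3/8$-plane'': one must check that $(\widehat{W}, \widehat{L}^1, \widehat{L}^2)$ fulfils \emph{all} the requirements in the definition (adaptedness, continuity, $\widehat{L}^i$ nondecreasing and flat off the corresponding edge, $\widehat{Z} \in \overline{S_1}$) and only then invoke uniqueness; here the orthogonality of the reflection on the diagonal is exactly what makes the folded model a well-posed convex-wedge problem.
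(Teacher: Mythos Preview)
Your proof is correct and follows essentially the same route as the paper: apply Tanaka's formula to $|Z_t^2-Z_t^1|$, use that $dL^1$ (resp.\ $dL^2$) is supported where $Z_t^2-Z_t^1>0$ (resp.\ $<0$) to simplify the $\sgn$-integrals, combine with the expression for $Z_t^1+Z_t^2$, and identify $\widehat W$ via L\'evy's characterization. Your definition of $\widehat W$ is algebraically identical to the paper's, and your additional remarks on the corner, the $\sgn(0)$ convention, and the appeal to uniqueness are welcome points of rigour that the paper leaves implicit.
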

\begin{proof}
By \eqref{eq:defZt}, we have
\begin{equation*}
   Z_t^2-Z_t^1=Z_0^2-Z_0^1 +W_t^2-W_t^1 + (r-1)(L_t^2 -L_t^1).
\end{equation*}
We apply Itô-Tanaka formula (see Theorem~1.5 in \cite[Chap.~VI \S 1]{Revuz1999}) to the continuous semimartingale $Z_t^2-Z_t^1$ and to the absolute value $\vert\cdot\vert$. We obtain
\begin{align*}
\vert Z_t^2-Z_t^1\vert
&=Z_0^2-Z_0^1 + \int_0^t \sgn (Z_t^2-Z_t^1) (\mathrm{d} W_t^2 -\mathrm{d} W_t^1) \\ &\quad + (r-1) \int_0^t \sgn (Z_t^2-Z_t^1) (\mathrm{d} L_t^2 -\mathrm{d} L_t^1) + \widehat{L}_t^2
\\ &=Z_0^2-Z_0^1 +\int_0^t \sgn (Z_t^2-Z_t^1) (\mathrm{d} W_t^2 -\mathrm{d} W_t^1)+ (1-r)(L_t^1 +L_t^2)+ \widehat{L}_t^2,
\end{align*}
as $L_t^1$ increases only when ($Z_t^2<0, Z_t^1 =0)$ and $L_t^2$ increases only when  $(Z_t^1<0, Z_t^2 =0).$
Let us recall that, by definition, $\widehat{L}_t^1=L_t^1+L_t^2$. By \eqref{eq:defZt}, we have
\begin{equation*}
   Z_t^1+Z_t^2=Z_0^1+Z_0^2 +W_t^1+W_t^2+2\mu t + (r+1)(L_t^1 +L_t^2).
\end{equation*}
Then, we directly obtain 
\begin{equation*}
\begin{cases}
\widehat{Z}_t^1= \frac{1}{2}(Z_t^1+Z_t^2-\vert Z_t^2-Z_t^1\vert)= \widehat{Z}_0^1+ \widehat{W}_t^1+ \mu t + r \widehat{L}_t^1 - \frac{1}{2} \widehat{L}_t^2,\\
\widehat{Z}_t^2 =\frac{1}{2}(Z_t^1+Z_t^2+\vert Z_t^2-Z_t^1\vert)= \widehat{Z}_0^2+ \widehat{W}_t^2+ \mu t +  \widehat{L}_t^1 + \frac{1}{2} \widehat{L}_t^2,
\end{cases}
\end{equation*}
where we defined
\begin{equation*}
\begin{cases}
\displaystyle\widehat{W}_t^1 \egaldef \int_0^t \frac{1+ \sgn(Z_t^2-Z_t^1)}{2} \mathrm{d}W_t^1 + \int_0^t \frac{1- \sgn(Z_t^2-Z_t^1)}{2} \mathrm{d}W_t^2,\medskip\\
\displaystyle\widehat{W}_t^2 \egaldef \int_0^t \frac{1- \sgn(Z_t^2-Z_t^1)}{2} \mathrm{d}W_t^1 + \int_0^t \frac{1+ \sgn(Z_t^2-Z_t^1)}{2} \mathrm{d}W_t^2.
\end{cases}
\end{equation*}
We easily verify that the associated quadratic variations satisfy $\langle \widehat{W}^1 \rangle_t= \langle W^1 \rangle_t =\sigma_1 t$, $\langle \widehat{W}^2 \rangle_t= \langle W^2 \rangle_t =\sigma_2 t$ and $\langle \widehat{W}^1 , \widehat{W}^2 \rangle_t = \langle W^1, W^2 \rangle_t=\rho t$ and we conclude by Lévy's characterization theorem, see Theorem~3.6 in \cite[Chap.~IV \S 3 p150]{Revuz1999}.
\end{proof}
The reflected process $\widehat{Z}$ is also recurrent and we denote $\widehat{\pi}$ its stationary distribution. 
\begin{prop}
\label{prop:hatpi}
For all measurable sets $A \subset S_1$, we have $\pi(A)=\frac{1}{2} \widehat{\pi} (A)$.
\end{prop}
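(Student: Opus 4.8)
The plan is to establish the identity $\pi(A) = \tfrac12\widehat\pi(A)$ for all measurable $A \subset S_1$ by combining the reflection map $\widehat Z$ constructed in Lemma~\ref{lem:semimart38} with the symmetry of $\pi$. First I would recall that, by the definition of $\widehat Z_t$, we have $\widehat Z_t \in S_1$ for all $t$, and that $\widehat Z_t = Z_t$ when $Z_t \in S_1$ while $\widehat Z_t = (Z_t^2, Z_t^1)$ when $Z_t \in S_2$. In other words, if $\sigma\colon (z_1,z_2)\mapsto(z_2,z_1)$ denotes the reflection across the diagonal, then $\widehat Z_t = Z_t \mathrm{1}_{\{Z_t\in S_1\}} + \sigma(Z_t)\mathrm{1}_{\{Z_t\in S_2\}}$. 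Since $\sigma$ maps $S_2$ bijectively onto $S_1$ (up to the diagonal, which has zero Lebesgue measure and hence zero $\Pi$-measure), the law of $\widehat Z_t$ is the pushforward of $\Pi$ under this piecewise map.

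Next I would use stationarity on both sides. The process $Z$ is positive recurrent with stationary law $\Pi$ (density $\pi$); assuming $Z_0\sim\Pi$, the marginal law of $Z_t$ is $\Pi$ for every $t$, so $\widehat Z_t$ has the fixed law described above, which is therefore the stationary law $\widehat\Pi$ of $\widehat Z$ (using that $\widehat Z$ is itself a recurrent reflected Brownian motion by Lemma~\ref{lem:semimart38}, hence has a unique stationary distribution). Concretely, for a measurable $A\subset S_1$,
\begin{equation*}
\widehat\pi(A) = \Pi(Z_t\in A,\, Z_t\in S_1) + \Pi(\sigma(Z_t)\in A,\, Z_t\in S_2) = \Pi(A) + \Pi(\sigma(A)),
\end{equation*}
where $\sigma(A)\subset S_2$. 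Now invoke the symmetry stated just before the proposition: in the symmetric parameter regime $\mu_1=\mu_2$, $\sigma_1=\sigma_2$, $r_1=r_2$, the law $\Pi$ is invariant under $\sigma$, i.e.\ $\pi(z_1,z_2)=\pi(z_2,z_1)$, hence $\Pi(\sigma(A)) = \Pi(A)$. Substituting gives $\widehat\pi(A) = 2\Pi(A) = 2\pi(A)$, which is exactly the claimed identity.

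To make this rigorous I would be slightly careful about two points. First, the symmetry $\pi(z_1,z_2)=\pi(z_2,z_1)$ should be justified (it follows because $\sigma(Z)$ solves the same reflected SDE \eqref{eq:defZt} with the same symmetric parameters and the roles of $L^1,L^2$ interchanged, so by uniqueness in law $\sigma(Z)\stackrel{d}{=}Z$ when started from a $\sigma$-symmetric initial law, and one passes to the stationary regime); this is essentially the remark already made in the text that $n(x+y)=0$. Second, one must check that the diagonal $\{z_1=z_2\}$ carries no $\Pi$-mass, which holds because $\Pi$ is absolutely continuous with density $\pi$. The main (and only genuine) obstacle is the identification of $\widehat\Pi$ with the pushforward of $\Pi$: one needs that $\widehat Z$ inherits stationarity from $Z$, which requires knowing that $\widehat Z$ is a well-defined Markov process with a unique invariant law — this is precisely the content of Lemma~\ref{lem:semimart38} together with the recurrence assertion for $\widehat Z$, so the argument is short once that lemma is in hand.
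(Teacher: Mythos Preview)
Your proof is correct and follows essentially the same route as the paper: decompose $\{\widehat Z_t\in A\}$ as $\{Z_t\in A\}\cup\{Z_t\in\sigma(A)\}$ and use the symmetry $\Pi(A)=\Pi(\sigma(A))$. The only cosmetic difference is that the paper identifies $\widehat\pi(A)$ via the ergodic limit $\lim_{t\to\infty}\mathbb P[\widehat Z_t\in A]$ from an arbitrary start, whereas you start $Z$ directly from $\Pi$ and read off the pushforward law; your version is slightly more streamlined and your explicit remarks on the diagonal having zero mass and on the justification of the symmetry of $\pi$ are welcome additions.
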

\begin{proof}
Let $A \subset S_1$ and $\widehat{A}\in S_2$ be the symmetric set with respect to the first diagonal. In the symmetric case, we have
$
\pi(A)=\pi (\widehat{A}).
$
By the ergodic properties of an invariant measure we have $\pi(A) = \lim_{t\to\infty} \mathbb{P}[Z_t \in A]$. Then
\begin{align*} 
\pi(A) &=\frac{1}{2} (\pi(A)+\pi (\widehat{A})) 
\\ &= \frac{1}{2} \lim_{t\to\infty} \left( \mathbb{P}[Z_t \in A] +\mathbb{P}[Z_t \in \widehat{A}] \right)
\\ &= \frac{1}{2} \lim_{t\to\infty} \mathbb{P}[\widehat{Z}_t \in A]
\\ &= \frac{1}{2} \widehat{\pi} (A).\qedhere
\end{align*}
\end{proof}

\subsection{Reformulation as a reflected Brownian motion in a quarter plane}
\label{sec:3/4}

We now perform a change of variables to obtain a new process $\widetilde{Z}_t$ in the positive quarter plane, defined by
\begin{equation*}
   \widetilde{Z}_t \egaldef (-\widehat{Z}_t^1+\widehat{Z}_t^2,\widehat{Z}_t^2),
\end{equation*}
see Figure~\ref{fig:symmetric_case_change_variable}. This reformulation at hand, we will be able to use the numerous results in the literature on reflected Brownian motion in a quadrant. Let us emphasize here that our drift is vertical (as shown below), while most of the existing results actually assume that the drift is either zero or oblique (with two non-zero coordinates). Accordingly, some attention is needed when applying directly previous results.
\begin{prop}
The process $\widetilde{Z}_t$ satisfies
\begin{equation*}
\begin{cases}
\widetilde{Z}_t^1=\widetilde{Z}_0^1 +\widetilde{W}_t^1 +(1 - r) \widehat{L}_t^1 +  \widehat{L}_t^2,
\\
\widetilde{Z}_t^2=\widetilde{Z}_0^2+\widetilde{W}_t^2+ \mu t+ \widehat{L}_t^1 + \frac{1}{2} \widehat{L}_t^2,
\end{cases}
\end{equation*}
where $\widetilde{W}$ is a Brownian motion with covariance matrix
\begin{equation*}
\widetilde{\Sigma}\egaldef 
\left(
\begin{array}{cc}
\widetilde{\sigma}_1 &  \widetilde{\rho}   \\
 \widetilde{\rho}& \widetilde{\sigma}_2 
\end{array}
\right)
=
\left(
\begin{array}{cc}
2(\sigma -\rho) & (\sigma-\rho)   \\
 (\sigma-\rho)  & \sigma 
\end{array}
\right),
\end{equation*}
while $\widehat{L}_t^1$ is the local time of the process on the horizontal axis and $\widehat{L}_t^2$ is the local time on the vertical axis.
Thus $\widetilde{Z}_t$ is a reflected Brownian motion in the quadrant $\mathbb{R}_+^2$ with drift $(0,\mu)$, covariance matrix $\widetilde{\Sigma}$
and reflections $(\widetilde{r}_1,1)\egaldef (1-r,1)$ and $(1, \widetilde{r}_2 )\egaldef(1,1/2)$. 
\label{prop:Ztilde}
\end{prop}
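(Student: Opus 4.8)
The plan is to obtain the semimartingale decomposition of $\widetilde Z_t=(-\widehat Z_t^1+\widehat Z_t^2,\widehat Z_t^2)$ purely algebraically from the decomposition of $\widehat Z_t$ furnished by Lemma~\ref{lem:semimart38}, and then to identify the resulting object as a reflected Brownian motion in the quarter plane. First I would simply substitute: the second coordinate $\widetilde Z_t^2=\widehat Z_t^2$ is unchanged, so its decomposition is read off directly from Lemma~\ref{lem:semimart38}, giving drift $\mu$, martingale part $\widehat W_t^2$, and local-time contributions $\widehat L_t^1+\tfrac12\widehat L_t^2$. For the first coordinate, I would compute
\begin{equation*}
-\widehat Z_t^1+\widehat Z_t^2 = -\widehat Z_0^1+\widehat Z_0^2 + (\widehat W_t^2-\widehat W_t^1) + (1-r)\widehat L_t^1 + \widehat L_t^2,
\end{equation*}
where the two $\mu t$ terms cancel (hence the vertical drift) and the $r\widehat L_t^1$ and $\widehat L_t^1$ terms combine into $(1-r)\widehat L_t^1$ while the $\pm\tfrac12\widehat L_t^2$ terms add up to $\widehat L_t^2$. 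Setting $\widetilde W_t^1\egaldef \widehat W_t^2-\widehat W_t^1$ and $\widetilde W_t^2\egaldef\widehat W_t^2$ then yields exactly the displayed system, with reflection vectors $(\widetilde r_1,1)=(1-r,1)$ on $\{z_1=0\}$ (from the coefficient of $\widehat L_t^1$) and $(1,\widetilde r_2)=(1,\tfrac12)$ on $\{z_2=0\}$ (from the coefficient of $\widehat L_t^2$).

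Next I would verify the covariance structure of $\widetilde W=(\widetilde W^1,\widetilde W^2)$. Since $\widehat W$ has covariance matrix $\Sigma$ with $\sigma_1=\sigma_2=\sigma$ in the symmetric case, a direct bilinearity computation of quadratic variations gives $\langle\widetilde W^1\rangle_t=\langle\widehat W^2-\widehat W^1\rangle_t=(\sigma-2\rho+\sigma)t=2(\sigma-\rho)t$, $\langle\widetilde W^2\rangle_t=\sigma t$, and $\langle\widetilde W^1,\widetilde W^2\rangle_t=\langle\widehat W^2-\widehat W^1,\widehat W^2\rangle_t=(\sigma-\rho)t$, which is precisely $\widetilde\Sigma$. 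One checks $\widetilde\Sigma$ is still elliptic, $\det\widetilde\Sigma=2(\sigma-\rho)\sigma-(\sigma-\rho)^2=(\sigma-\rho)(\sigma+\rho)=\sigma^2-\rho^2>0$. Because $(\widetilde W^1,\widetilde W^2)$ is a continuous local martingale with these deterministic quadratic (co)variations, Lévy's characterization theorem (Theorem~3.6 in \cite[Chap.~IV \S 3]{Revuz1999}, exactly as invoked in the proof of Lemma~\ref{lem:semimart38}) shows it is a Brownian motion with covariance $\widetilde\Sigma$.

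Finally I would identify the state space and confirm the reflection picture: the map $(\widehat z_1,\widehat z_2)\mapsto(-\widehat z_1+\widehat z_2,\widehat z_2)$ is linear with determinant $1$ and sends the wedge $S_1=\{\widehat z_1\le\widehat z_2,\ \widehat z_2\ge0\}$ onto $\{\widetilde z_1\ge0,\ \widetilde z_2\ge0\}=\mathbb R_+^2$; the diagonal $\widehat z_1=\widehat z_2$ maps to $\{\widetilde z_1=0\}$ and the horizontal axis $\widehat z_2=0$ maps to $\{\widetilde z_2=0\}$. Under this linear map the reflection vectors transform covariantly, which matches the coefficients $(1-r,1)$ and $(1,\tfrac12)$ already extracted. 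I would also note that $\widehat L^1_t$ (resp.\ $\widehat L^2_t$) increases only when $\widehat Z_t$ is on the horizontal axis (resp.\ diagonal), i.e.\ only when $\widetilde Z_t$ is on $\{\widetilde z_2=0\}$ (resp.\ $\{\widetilde z_1=0\}$), so these are legitimately the boundary local times of $\widetilde Z$ on the two axes. I do not expect a serious obstacle here: the only points requiring a little care are the bookkeeping of which $\mu t$ and local-time terms cancel or combine (the vertical drift is the one slightly surprising feature, already flagged in the text preceding the proposition), and the appeal to Lévy's theorem to promote $\widetilde W$ from a local martingale to a genuine Brownian motion with the stated covariance.
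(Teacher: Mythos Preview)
Your proof is correct and follows essentially the same route as the paper: substitute the semimartingale decomposition of $\widehat Z$ from Lemma~\ref{lem:semimart38} into $\widetilde Z_t=(-\widehat Z_t^1+\widehat Z_t^2,\widehat Z_t^2)$, then compute the covariance of the linear image $\widetilde W=(-\widehat W^1+\widehat W^2,\widehat W^2)$. The paper's proof is terser (it does not spell out the state-space identification or the local-time bookkeeping, and it does not invoke L\'evy's theorem, since $\widetilde W$ is a fixed linear image of the Brownian motion $\widehat W$ and hence automatically Gaussian), but the substance is identical.

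One small slip to fix: in your first paragraph you attach $(\widetilde r_1,1)=(1-r,1)$ to $\{z_1=0\}$ and $(1,\widetilde r_2)=(1,\tfrac12)$ to $\{z_2=0\}$, but this is backwards; your own final paragraph has it right, since $\widehat L_t^1$ increases on the horizontal axis $\{\widetilde z_2=0\}$ (image of $\widehat z_2=0$) and $\widehat L_t^2$ on the vertical axis $\{\widetilde z_1=0\}$ (image of the diagonal). Just make the two passages consistent.
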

\begin{proof}
By Lemma \ref{lem:semimart38}, we have
\begin{equation*}
\begin{cases}
\widehat{Z}_t^1=\widehat{Z}_0^1+ \widehat{W}_t^1+ \mu t + r \widehat{L}_t^1 - \frac{1}{2}\widehat{L}_t^2,
\\
\widehat{Z}_t^2=\widehat{Z}_0^2+ \widehat{W}_t^2+ \mu t+ \widehat{L}_t^1 +  \frac{1}{2}\widehat{L}_t^2,
\end{cases}
\end{equation*}
where $\widehat{W}_t$ is a Brownian motion with the same covariance matrix as $W_t$, $\widehat{L}_t^2$ is the local time of $\widehat{Z}_t$ on the diagonal and $\widehat{L}_t^1= L_t^1 + L_t^2$ is the local time of $\widehat{Z}_t$ on the horizontal axis. 
Then we have
\begin{equation*}
\begin{cases}
\widetilde{Z}_t^1=-\widehat{Z}_0^1+\widehat{Z}_0^2  - \widehat{W}_t^1+\widehat{W}_t^2 +(1 - r) \widehat{L}_t^1 +  \widehat{L}_t^2,
\\
\widetilde{Z}_t^2=\widehat{Z}_0^2+\widehat{W}_t^2+ \mu t+ \widehat{L}_t^1 + \frac{1}{2} \widehat{L}_t^2.
\end{cases}
\end{equation*}
The covariance matrix of the Brownian motion 
$
\widetilde{W}_t\egaldef (-\widehat{W}_t^1+\widehat{W}_t^2,\widehat{W}_t^2)
$
is 
\begin{equation*}
\left(
\begin{array}{cc}
\widetilde{\sigma}_1 &  \widetilde{\rho}   \\
 \widetilde{\rho}& \widetilde{\sigma}_2 
\end{array}
\right)
=
\left(
\begin{array}{cc}
2(\sigma -\rho) & (\sigma-\rho)   \\
 (\sigma-\rho)  & \sigma 
\end{array}
\right).\qedhere
\end{equation*}
\end{proof}

Let $\widehat{L}_{1}(x,y)$ be the Laplace transform of $\frac{1}{2}\widehat{\pi}$ and $\widetilde{L}_{1}(p,q)$ be the Laplace transform of $\frac{1}{2}\widetilde{\pi}$, where $\widetilde{\pi}$ is the stationary distribution of $\widetilde{Z}$. Let finally  $L_1(x,y)$  be the Laplace transform as in \eqref{eq:formula_Laplace_transform_L1}.
\begin{lem}
\label{lem:Laplacetransformchangevariable}
For $(p,q)=(-x,x+y)$, the various Laplace transforms satisfy
\begin{equation*} 
L_1(x,y)=\widehat{L}_{1}(x,y)=\widetilde{L}_{1}(-x,x+y)=\widetilde{L}_{1}(p,q).
\end{equation*}
\end{lem}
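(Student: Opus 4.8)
\textbf{Proof plan for Lemma~\ref{lem:Laplacetransformchangevariable}.}
The plan is to chase the three equalities one at a time, using the explicit change-of-variable formulas together with Proposition~\ref{prop:hatpi} and Proposition~\ref{prop:Ztilde}. First I would establish $L_1(x,y)=\widehat L_1(x,y)$. By definition $\widehat L_1$ is the Laplace transform of the measure $\frac12\widehat\pi$ on $S_1$, and by Proposition~\ref{prop:hatpi} we have $\pi(A)=\frac12\widehat\pi(A)$ for every measurable $A\subset S_1$. Hence the two measures coincide on $S_1$, and since $L_1(x,y)$ is precisely the integral of $e^{xz_1+yz_2}$ against $\pi$ over $S_1$ (see \eqref{eq:formula_Laplace_transform_L1}), the two Laplace transforms are literally the same integral; this equality is immediate once the measures are identified.

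Next I would prove $\widehat L_1(x,y)=\widetilde L_1(-x,x+y)$. Recall from Section~\ref{sec:3/4} that $\widetilde Z_t=(-\widehat Z_t^1+\widehat Z_t^2,\widehat Z_t^2)$, i.e.\ $\widetilde Z=\psi(\widehat Z)$ where $\psi(z_1,z_2)=(-z_1+z_2,z_2)$ is a linear bijection from $S_1$ (the $3/8$-wedge $\{z_1\le z_2,\ z_2\ge 0\}$) onto the quadrant $\mathbb R_+^2$, with $\det\psi=1$. Consequently $\widetilde\pi=\psi_\star\widehat\pi$ is the pushforward measure, and for any $(p,q)$ one has
\begin{equation*}
\widetilde L_1(p,q)=\frac12\int_{\mathbb R_+^2} e^{p\widetilde z_1+q\widetilde z_2}\,\widetilde\pi(\mathrm d\widetilde z)
=\frac12\int_{S_1} e^{p(-z_1+z_2)+q z_2}\,\widehat\pi(\mathrm dz).
\end{equation*}
Now I would match exponents: we need $p(-z_1+z_2)+q z_2 = x z_1+y z_2$ for all $(z_1,z_2)$, i.e.\ $-p=x$ and $p+q=y$, which gives exactly $p=-x$, $q=x+y$. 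With this substitution the right-hand side becomes $\frac12\int_{S_1}e^{xz_1+yz_2}\widehat\pi(\mathrm dz)=\widehat L_1(x,y)$, as desired. One small point requiring a line of justification is that $\psi$ maps $S_1$ bijectively onto $\mathbb R_+^2$: if $z_1\le z_2$ and $z_2\ge 0$ then $-z_1+z_2\ge 0$ and $z_2\ge 0$, and conversely; this is what makes the pushforward statement and the domain of integration consistent. The last equality $\widetilde L_1(-x,x+y)=\widetilde L_1(p,q)$ is then just the definition $(p,q)=(-x,x+y)$ stated in the lemma, so nothing remains to prove.

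I do not expect a serious obstacle here; the lemma is essentially a bookkeeping statement tying together the two reductions already performed in Propositions~\ref{prop:hatpi} and~\ref{prop:Ztilde}. The only mild subtlety—and the place where a careless proof could go wrong—is the direction of the change of variables and the corresponding exponent matching: one must be careful that $\psi$ (not $\psi^{-1}$) is the map sending $\widehat Z$ to $\widetilde Z$, so that it is $\widehat\pi$ that gets pushed forward to $\widetilde\pi$, and then that the substitution $(p,q)=(-x,x+y)$ is the one making the two exponential kernels agree. Once the change of variables is set up with $\det\psi=1$ (so no Jacobian factor appears and the total mass $\frac12$ is preserved), all three equalities follow by inspection.
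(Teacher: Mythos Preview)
Your proposal is correct and follows essentially the same approach as the paper's own proof: invoke Proposition~\ref{prop:hatpi} for the first equality and then perform the linear change of variables $\psi(z_1,z_2)=(-z_1+z_2,z_2)$ coming from $\widetilde Z=\psi(\widehat Z)$ for the second. You have simply spelled out in more detail the exponent matching and the bijection $S_1\to\mathbb R_+^2$ that the paper compresses into the phrase ``a simple change of variable in the Laplace transform''.
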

\begin{proof}
Proposition~\ref{prop:hatpi} implies that 
$
L_1(x,y)=\widehat{L}_{1}(x,y)$. Using that $\widetilde{Z}_t=(-\widehat{Z}_t^1+\widehat{Z}_t^2,\widehat{Z}_t^2)$, a simple change of variables in the Laplace transform yields $\widehat{L}_{1}(x,y)=\widetilde{L}_{1}(-x,x+y)$.
\end{proof}

\subsection{Functional equations}
We now state a functional equation, which characterizes the Laplace transform $\widetilde{L}_{1}(p,q)$.

\begin{prop}
\label{prop:sym_func_eq}
In the symmetrical case, the following functional equation holds:
\begin{equation}
\label{eq:funcsym2}
   U(p,q)\widetilde{L}_{1}(p,q) + C(p,q) \ell_1(p) +  A(p,q) m(q) = 0,
\end{equation}
where 
\begin{equation}
 \begin{cases}
\DD U(p,q) =  (\sigma-\rho)p^2 + (\sigma-\rho)qp +\frac{\sigma q^2}{2} + \mu q, \\[0.1cm]
 C(p,q) =  (1-r)p + q, \\[0.1cm]
A(p,q) =  (\sigma -\rho)(p+\frac{1}{2}q).
\end{cases}
\label{eq:coeffsymcase}
\end{equation}
\end{prop}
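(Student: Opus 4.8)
The plan is to specialize the general asymmetric functional equation \eqref{eq:sys3} to the symmetric setting and to read off \eqref{eq:funcsym2} together with \eqref{eq:coeffsymcase}. First I would invoke the symmetry observation already recorded in the text: since $\pi(z_1,z_2)=\pi(z_2,z_1)$, the normal-derivative transform $n(x+y)$ vanishes identically, so the term $\theta n(q)$ in \eqref{eq:sys3} disappears. Next I would impose $\mu_1=\mu_2=\mu$, $\sigma_1=\sigma_2=\sigma$, $r_1=r_2=r$ in the coefficients \eqref{eq:Ker1}. Setting $\mu_1=\mu_2$ kills the $(\mu_2-\mu_1)p$ term in $U$ and the $\sigma_1-\sigma_2$ contribution everywhere, so $U(p,q)$ collapses to $\theta p^2 + \frac{\sigma}{2}q^2 + (\sigma-\rho)pq + \mu q$; since $\theta=\frac{\sigma_1+\sigma_2-2\rho}{2}=\sigma-\rho$ in the symmetric case, this is exactly the claimed $U(p,q)=(\sigma-\rho)p^2+(\sigma-\rho)qp+\frac{\sigma q^2}{2}+\mu q$. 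The coefficient $A(p,q)=\frac{\theta(2p+q)}{2}+\frac{(\sigma_2-\sigma_1)q}{2}+\mu_2-\mu_1$ reduces to $\frac{\theta(2p+q)}{2}=(\sigma-\rho)(p+\frac12 q)$, and $C(p,q)=(1-r_1)p+q$ becomes $(1-r)p+q$, matching \eqref{eq:coeffsymcase}. Finally, the constant $E=(1-r_1)\nu_1(0)-(1-r_2)\nu_2(0)$ vanishes by symmetry (the two boundary densities agree at the origin), and by Lemma~\ref{lem:Laplacetransformchangevariable} one has $\widetilde L_1(p,q)=L_1(x,y)$ under $(p,q)=(-x,x+y)$, so the $L_1$ term in \eqref{eq:sys3} is literally the $\widetilde L_1$ term of \eqref{eq:funcsym2}. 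Assembling these substitutions turns \eqref{eq:sys3} into \eqref{eq:funcsym2}.

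An alternative, self-contained route that I would also sketch is to rerun the proof of Proposition~\ref{prop:main_func_eq} directly on the process $\widetilde Z$ of Proposition~\ref{prop:Ztilde}, which is now an ordinary reflected Brownian motion in the convex quadrant. There one applies the basic adjoint relationship (Proposition~\ref{prop:BAR}, valid in the orthant case) to the exponential test function $f(z_1,z_2)=e^{pz_1+qz_2}$ — which converges precisely because the cone is convex, unlike in the three-quarter-plane — obtaining $\mathcal G f = \widetilde K(p,q) f$ where $\widetilde K(p,q)=\frac12(\widetilde\sigma_1 p^2+2\widetilde\rho pq+\widetilde\sigma_2 q^2)+\widetilde\mu_2 q$ with the covariance matrix $\widetilde\Sigma$ computed in Proposition~\ref{prop:Ztilde} and vertical drift $(0,\mu)$. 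Substituting $\widetilde\sigma_1=2(\sigma-\rho)$, $\widetilde\rho=\sigma-\rho$, $\widetilde\sigma_2=\sigma$ gives $\widetilde K(p,q)=(\sigma-\rho)p^2+(\sigma-\rho)pq+\frac{\sigma q^2}{2}+\mu q=U(p,q)$. The two reflection terms $\widetilde R_1\cdot\nabla f$ and $\widetilde R_2\cdot\nabla f$ with $\widetilde R_1=(1-r,1)$, $\widetilde R_2=(1,1/2)$ produce, after integrating against the boundary measures, the terms $C(p,q)\ell_1(p)$ and $A(p,q)m(q)$ respectively — here one must check that the boundary measure on the vertical axis of $\widetilde Z$ corresponds, under the linear change of variables, to the diagonal measure $m$ of the original process, and that its reflection coefficient $(1,1/2)$ yields exactly $p+\frac12 q$ up to the factor $\sigma-\rho$ hidden in the normalization of local time.

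The main obstacle in either route is bookkeeping rather than conceptual: one has to be careful that the change of variables of Section~\ref{sec:3/4} rescales local times, so the reflection vector $(1,1/2)$ appearing in Proposition~\ref{prop:Ztilde} must be reconciled with the coefficient $A(p,q)=(\sigma-\rho)(p+\frac12 q)$ — the factor $\sigma-\rho$ is not part of the reflection vector but arises from how the functional equation is normalized (compare the general $A$ in \eqref{eq:Ker1}, which already carries $\theta/2$). I would therefore verify the degenerate/limiting consistency of the coefficients by a direct check: evaluating \eqref{eq:funcsym2} against \eqref{eq:sys3} at, say, $p=0$ or $q=0$, where the reductions are most transparent, and confirming that the surviving terms match. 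Once the coefficient identifications $U$, $A$, $C$ are pinned down, the functional equation itself is just the image of \eqref{eq:sys3} under the specialization, so the proof is short; I expect the bulk of the written argument to be the three coefficient computations and the remark that $\theta=\sigma-\rho$ and $E=0$ in the symmetric case.
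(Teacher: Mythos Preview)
Your proposal is correct and mirrors the paper's own argument almost exactly: the paper gives two proofs, the first specializing Proposition~\ref{prop:main_func_eq}/\eqref{eq:sys3} to the symmetric case (using $n\equiv 0$, $\theta=\sigma-\rho$, $E=0$, and Lemma~\ref{lem:Laplacetransformchangevariable}) just as you do, and the second invoking the known quadrant functional equation for $\widetilde Z$ from Proposition~\ref{prop:Ztilde}, which is your alternative route. Your remark about the factor $\sigma-\rho$ in $A(p,q)$ coming from the local-time normalization (the paper writes $\widetilde\nu(x)=2(\sigma-\rho)\pi(x,x)$) is exactly the bookkeeping step the paper handles in its second proof.
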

As a consequence of Proposition \ref{prop:sym_func_eq}, the Laplace transform $\widetilde{L}_{1}(p,q)$ may be computed along the same way as in~\cite{FrRa-19} (contour integral expressions) or \cite{BoElFrHaRa-21} (hypergeometric expressions). Interestingly, this functional equation may be obtained by two different techniques:
\begin{enumerate}[label=\arabic{*}.,ref=\arabic{*}]
   \item\label{proof1}We can use the functional equation~\eqref{eq:sys3} already obtained in the general (a priori non-symmetric) case 
and apply it to the symmetric case, using Lemma~\ref{lem:Laplacetransformchangevariable}.
   \item\label{proof2}We can also use Proposition~\ref{prop:Ztilde}, which says that $\widetilde{Z}$ is a reflected Brownian motion in a quadrant and use the functional equation already known in the bibliography \cite[Eq.~(2.3)]{DaMi-11} and \cite[Eq.~(5)]{FrRa-19}.
\end{enumerate}
We present both proofs below.
\begin{proof}[Proof \ref{proof1} (of Proposition \ref{prop:sym_func_eq})]
In the symmetric case, the main functional equation (see Proposition \ref{prop:main_func_eq}) takes the simpler form
\begin{equation}
\label{eq:funcsym1}
   K(x,y)L_1(x,y) + k (x,y) m (x+y) + k_1(x,y) \ell_1(x) = 0,
\end{equation}
where
\begin{equation*}
   K(x,y)= \frac{1}{2} (\sigma x^2  + 2\rho xy +\sigma y^2)+ \mu(x+y),
\end{equation*}
and 
\begin{equation*}
   k (x,y)=\frac{1}{2}(\sigma -\rho ) (-x+y) \quad \text{and} \quad k_1(x,y)=r x +y .
\end{equation*}
As in Section~\ref{sec:equafunc}, we introduce the new variables 
\begin{equation*}
   p=-x \quad \textrm{and} \quad q= x+y.
\end{equation*}
Keeping the same names for the unknown functions, we get from \eqref{eq:funcsym1} and~\eqref{eq:sys3}
\begin{equation*}
   U(p,q)L_1(p,q) + C(p,q) \ell_1(p) +  A(p,q) m(q) = 0,
\end{equation*}
where, by using~\eqref{eq:Ker1}, we obtain the value of $U$, $C$ and $A$ given in \eqref{eq:coeffsymcase}.
\end{proof}
\begin{proof}[Proof \ref{proof2} (of Proposition \ref{prop:sym_func_eq})]
By Proposition~\ref{prop:Ztilde}, the process $\widetilde{Z}$ is a reflected Brownian motion in a quadrant.  
We denote by $\widetilde{\nu}$ the density of the boundary invariant measure of $\widetilde{Z}$ on the vertical axis, which is defined by
\begin{equation*}
   \widetilde{\nu} (x){\mathrm{d}x} = \mathbb{E}_\Pi \int_0^1 \mathrm{1}_{\mathrm{d}x\times\{0\} } (\widetilde{Z}_s) \mathrm{d}\widehat{L}_s^2.
\end{equation*}
Now recall from \cite[\S 2.2]{BoElFrHaRa-21} that we have
\begin{equation*}
   \widetilde{\nu}(x) =(\sigma - \rho)\widetilde{\pi}(0,x)=2(\sigma - \rho)\pi(x,x).
\end{equation*}
It follows that the Laplace transform of $\widetilde{\nu}$ is equal to $2(\sigma -\rho)m(q)$.
It remains to use the well-known functional equation for a reflected Brownian motion in a quadrant, see, e.g., \cite[Eq.~(2.3)]{DaMi-11} and \cite[Eq.~(5)]{FrRa-19}.
Thus, we obtain the functional equation \eqref{eq:funcsym1}. 
\end{proof}

\subsection{The roots of the kernel $U(p,q)$}
The formulas of Lemmas~\ref{lem:PuPv} and~\ref{lem:QuQv} are simplified in a pleasant way.

\begin{lem} \label{lem:Q}
The function $U(p,q)$ in \eqref{eq:funcsym2}, viewed as a polynomial in the variable $q$, has  two roots $Q_1(p)$ and $Q_2(p)$, which are the branches of a two-sheeted covering over the $p$-plane. They are analytic in the whole complex plane cut along $(-\infty,p_1]\cup [p_2,\infty)$, with
\begin{equation}\label{eq:bpp}
   p_1= \frac{\mu\bigl(\sigma-\rho +\sqrt{2\sigma(\sigma-\rho)}\bigr)}{\sigma^2 - \rho^2}<0<
p_2= \frac{\mu\bigl(\sigma-\rho - \sqrt{2\sigma(\sigma-\rho)}\bigr)}{\sigma^2 - \rho^2}.
\end{equation}
The branches $Q_1(p)$ and $Q_2(p)$ are separated (except on the cut) and they satisfy 
\begin{equation}\label{eq:sep1}
\begin{cases}
\DD \Re(Q_1(ix)) \leq  0  \leq \Re(Q_2(ix)),  \quad  \forall x\in \mathbb R, \\[0.2cm]
\DD \Re(Q_1(p))   \leq  \Re(Q_2(p)), \quad \forall p \in \Cc.
\end{cases}
\end{equation}
\end{lem}

\begin{proof}
The last property of \eqref{eq:sep1} is a direct application of the maximum modulus principle to the function $\exp Q(p)$. The proof of the lemma is complete.
\end{proof}

Mutatis mutandis, the following lemma holds, with the convenient notation.
\begin{lem}
\label{lem:Psym}
The function $V(p,q)$, viewed as a polynomial in the variable $p$, has two roots $P_1(q)$ and $P_2(q)$, which are the branches of a two-sheeted covering over the $q$-plane. They are analytic in the whole complex plane cut along $(-\infty,q_1]\cup [q_2,\infty)$, with 
\begin{equation}
\label{eq:bpq}
   q_1= 0 < q_2 =  -\frac{4\mu}{\sigma+\rho}.
\end{equation}
They are separated and satisfy 
\begin{equation}
\label{eq:sep3}
\begin{cases}
\DD \Re(P_1(ix)) \leq 0 \leq \Re(P_2(ix)),  \quad  \forall x\in \mathbb{R}, \\[0.2cm]
\DD \Re(P_1(p))  \leq \Re(P_2(p)), \quad \forall p \in \Cc.
\end{cases}
\end{equation}
\end{lem}

With the above definitions, when $\mu<0$,
\begin{equation*}
   P_1(0)=P_2(0)=0 \quad \text{and} \quad Q_1(0)=0.
\end{equation*}
Our goal is to set a boundary value problem (BVP) for either of the functions $m(q)$ or $\ell_1(p)$ on an adequate hyperbola. 

\subsection{The hyperbolas}

The following lemma is an immediate application of the results of 
Lemma~\ref{lem:PuPv}.
\begin{lem}
\label{lem:hpq}
The functions $Q_1$ and $Q_2$ map the cut $(-\infty,p_1]$ 
(resp.\ $[p_2,\infty)$)
onto the right branch $\H_q^+$ (resp.\ the left branch  $\H_q^-$) of the hyperbola $\H_q$
\begin{equation}\label{eq:hq0}
   (\sigma+\rho) x^2 - (\sigma-\rho)y^2 + 4\mu x + \frac{2\mu^2}{\sigma} =0,
\end{equation}
rewritten in the canonical form (since $\sigma>\vert\rho\vert$) as 
\begin{equation}\label{eq:hq}
\left(x + \frac{2\mu}{\sigma+\rho}\right)^2  -\left(\frac{\sigma-\rho}{\sigma+\rho}\right) y^2 =
\frac{2\mu^2(\sigma-\rho)}{\sigma(\sigma+\rho)^2}.
\end{equation}
Similarly, $P_1$ and $P_2$ map the cut $(-\infty,q_1]$ (resp.\ $[q_2,\infty)$) 
onto  the right branch $\H_p^+$ (resp.\ the left branch  $\H_p^-$) of the hyperbola $\H_p$
\begin{equation}\label{eq:hp}
\left(x - \frac{\mu}{\sigma+\rho}\right)^2 - \left(\frac{\sigma-\rho}{\sigma+\rho}\right) y^2 =
\left(\frac{\mu}{\sigma+\rho}\right)^2,
\end{equation}
which goes through the point $(0,0)$.
\end{lem}

\begin{proof}
On the cuts $[p_1,\infty)$ and $(-\infty,p_2]$, the quantities $Q_1(p)$ and $Q_2(p)$ take complex conjugate values of the form  $x\pm iy$, where
\begin{align*}
   Q_1(p)+Q_2(p) & = \frac{-2[\mu+(\sigma-\rho)p]}{\sigma}=2x,\\
   Q_1(p)Q_2(p) & = \frac{2(\sigma-\rho)p^2}{\sigma}= x^2+y^2.
\end{align*}
Equations \eqref{eq:hq0} and \eqref{eq:hq} follow immediately, and \eqref{eq:hp} is obtained in an entirely similar way.
\end{proof}

\subsection{Analytic continuation and BVP} For any arbitrary simple closed curve $\U$, $G_\U$
(resp.\ $G^c_\U$) will denote the interior (resp.\ exterior) domain
bounded by $\U$, i.e., the domain remaining on the left-hand side when
$\U$ is traversed in the positive (counterclockwise) direction. This
definition remains valid for the case when $\U$ is unbounded but closable at infinity. For instance, $G_{\H_q^+}$ (resp.\ $G_{\H_q^+}^c$) is the region situated to the right (resp.\ to the left) of the branch $\H_q^+$ of the hyperbola $\H_q$.

\medskip
\begin{coro}\label{coro:automorphy} \mbox{ }
\begin{enumerate}[label=\arabic{*}.,ref=\arabic{*}]
   \item\label{it:1_compo}$G_{\H_p^- } \setminus [-\infty,p_1] \mathrel{{\underrightarrow{\ Q_2(p)\ }\atop
\overleftarrow{\ P_1(q)\ }}} G_{\H_q^+} \setminus [q_2,+\infty ]$ and the mappings are conformal.
  \item\label{it:2_compo}The values of $Q_1$ belong to $ G_{\H_q^+}^c$. 
  \item\label{it:3_compo}The values of $Q_2$ belong to $G_{\H_q^- }^c$. 
\end{enumerate}
Moreover, the following automorphy relationships hold:
\begin{eqnarray*}
P_1 \circ Q_1(p) &=& \left\{ \begin{array}{lllll} p, & \mbox{if} & p
\in G_{\H_p^+ }^c,  \\ \neq p, & \mbox{if} & p \in G_{\H_p^+ }. 
\end{array} \right.  \quad\mbox{Then } 
P_1 \circ Q_1(G_{\H_p^+ }^c) = G_{\H_p^+}^c \\
P_2 \circ Q_1(p) &=& \left\{ \begin{array}{lllll} p, & \mbox{if} & p
\in G_{\H_p^+ } , \\ \neq p, & \mbox{if} & p \in G_{\H_p^+}^c.
\end{array} \right. \quad\mbox{Then } 
P_2 \circ Q_1(G_{\H_p^+}) = G_{\H_p^+}.\\
P_1 \circ Q_2(p) &=& \left\{ \begin{array}{lllll} p, & \mbox{if} & p
\in G_{\H_p^-},  \\ \neq p, & \mbox{if} & p \in G_{\H_p^-}^c.  \\
\end{array} \right. \quad \mbox{Then }  P_1 \circ Q_2(G_{\H_p^-}) = G_{\H_p^-}. \\
P_2 \circ Q_2(p) &=& \left\{ \begin{array}{lllll} p, & \mbox{if} & p
\in G_{\H_p^-}^c, \\ \neq p & \mbox{if} & p \in G_{\H_p^-}.
\end{array} \right. \quad\mbox{Then }  P_2 \circ Q_2(G_{\H_p^-}^c) =G_{\H_p^-}^c\end{eqnarray*} 
\end{coro}
\begin{proof}
The arguments are  analogous to those presented in \cite[Chap.~5 and Chap.~6]{FIM-2017}. 
Assertion~\ref{it:1_compo} is immediate. As for assertions \ref{it:2_compo} and \ref{it:3_compo}, they  follow mainly from the maximum modulus principle applied to the functions $Q_1(p)$ and $Q_2(p)$ respectively. The automorphy relationships can be checked up to some tedious calculus (omitted). They also can be verified by using the following GeoGebra numerical animation \href{https://www.geogebra.org/m/phvjk35w}{https://www.geogebra.org/m/phvjk35w}
\end{proof}

Letting $q$ tend successively to the upper and lower edge of the
slit $(-\infty, q_1]$,  and using the fact that $m(q)$ is analytic in the left half-plane $\{\Re(q)\le0\}$, we  eliminate $m(q)$ from \eqref{eq:funcsym2} to get
\begin{equation}\label{eq:bvpsym} 
   \ell_1(P_1(q)) F(P_1(q),q) - \ell_1(P_2(q))F(P_2(q),q) = 0,\quad \mbox{for } q \in (-\infty, q_1],
\end{equation}
where 
\begin{equation*}
   F(p,q) = \frac{C(p,q)}{A(p,q)}.
\end{equation*}
Then the determination of $\ell_1(p)$, meromorphic in the 
domain  $G_{\H_p^+}^c$, is equivalent to solving a
BVP of Riemann-Hilbert-Carleman type, on the contour $\H_p^+$ in the
complex plane, as originally proposed in \cite{FaIa-79}. More precisely, by using the first two properties of Corollary~\ref{coro:automorphy}, and remembering that on the cut $(-\infty,q_1], P_1(q)=\overline{P_2(q)}$, this BVP takes the following form:
\begin{equation} \label{eq:bvpsym2}
\ell_1(p) K(p) - \ell_1(\overline{p}) K(\overline{p}) = 0, \quad p \in \H_p^+ ,
\end{equation}
where  $K(p) = F(p,Q_1(p))$, and $\ell_1$ is sought to be meromorphic inside $G_{\H_p^+}^c$,
its poles being the possible zeros of $C(p,Q_1(p))$ in the region 
$G_{\H_p^+}^c\cap\{\Re(p)>0\}$.

Interestingly, Corollary~\ref{coro:automorphy} allows to carry out the analytic continuation of the functions $\ell_1(p)$ and $m(q)$, satisfying  equation~\eqref{eq:funcsym2}. 
\begin{thm}
The functional equation
\begin{equation} \label{eq:prolong} 
\ell_1(p)F(p,Q_1(p)) - \ell_1(P_2\circ Q_1(p)) F(P_2\circ Q_1(p),Q_1(p)) = 0
\end{equation}
is valid for all~$p\in\Cc$ and provides the analytic continuation of $\ell_1$ as a
meromorphic function (the number of poles being finite) to the whole
complex plane cut along $[p_2,\infty)$. 
\end{thm}
\begin{proof}  It is a direct consequence of the automorphy properties given in Corollary~\ref{coro:automorphy}. Indeed, it suffices in equation~\eqref{eq:bvpsym} to let 
$q$ quit the cut $(-\infty,q_1]$, while remaining in $\H_q^-$. Then to this $q$ corresponds a point $p\in G_{\H_q^+}^c$ satisfying $P_1 \circ Q_1(p) =p$, which leads to equation~\eqref{eq:prolong}.
\end{proof}

\subsection{Reformulation as a reflected Brownian motion in a $\beta$-cone}
\label{sec:beta}

Let $\beta$ be the angle in $(\pi,2\pi)$ such that $\cos \beta = -\rho/\sigma$, that is
\begin{equation*}
   \beta =2\pi - \arccos (- \rho/\sigma) \in (\pi,2\pi).
\end{equation*}
The simple linear mapping 
\begin{equation*}
T\egaldef \frac{1}{\sqrt{\sigma}} \left( \begin{array}{cc}
\frac{1}{\sin \beta} & \cot \beta \\ 
0 & 1
\end{array} \right)
\end{equation*}
given in the appendix of \cite{FrRa-19} transforms the reflected Brownian motion $Z$ of covariance matrix $\Sigma$ in the three-quarter plane into a Brownian motion in a non-convex cone of angle $\beta$, with identity covariance matrix and with two equal reflection angles $\delta$ such that
\begin{equation}
\label{eq:deltadef}
   \tan {\delta} = \frac{\sin \beta}{r+\cos \beta}.
\end{equation}

\begin{prop}
The process $T \widetilde{Z}$ is a reflected Brownian motion in a cone of angle $\beta/2$ and reflection angle $\epsilon=\pi/2$ and $\delta\in(0,\pi)$ defined in~\eqref{eq:deltadef}, see Figure~\ref{fig:symmetric_case_change_variable}.
\end{prop}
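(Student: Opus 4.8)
The plan is to track how the three successive changes of variable act on the process defined by Proposition~\ref{prop:Ztilde}, and to check that the composite map $T$ applied to $\widetilde Z$ lands in a cone of half-angle $\beta/2$ with the claimed reflection data. First I would recall from Proposition~\ref{prop:Ztilde} that $\widetilde Z$ is a reflected Brownian motion in the quadrant $\mathbb R_+^2$, with drift $(0,\mu)$, covariance matrix $\widetilde\Sigma$ given there, and reflection vectors $(\widetilde r_1,1)=(1-r,1)$ on the horizontal axis and $(1,\widetilde r_2)=(1,1/2)$ on the vertical axis. The linear map $T$ from the appendix of \cite{FrRa-19} is precisely the one that normalizes a covariance matrix of the form $\widetilde\Sigma$ to the identity while sending the quadrant $\mathbb R_+^2$ to a cone; since the opening angle produced by this normalization is governed by the off-diagonal correlation, I would compute the correlation coefficient $\widetilde\rho/\sqrt{\widetilde\sigma_1\widetilde\sigma_2}$ from the entries of $\widetilde\Sigma$ and verify that it equals $-\cos(\beta/2)$, so that the image cone has opening $\pi-\arccos(-\cos(\beta/2))=\beta/2$, which lies in $(\pi/2,\pi)$ and is therefore convex. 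This is the step where the particular values $\widetilde\sigma_1=2(\sigma-\rho)$, $\widetilde\sigma_2=\sigma$, $\widetilde\rho=\sigma-\rho$ must be plugged in and simplified, using $\cos\beta=-\rho/\sigma$ together with the half-angle identity $\cos^2(\beta/2)=(1+\cos\beta)/2$.

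Next I would compute the images of the two reflection vectors under $T$ and the image of the drift, since a linear change of coordinates transforms a reflected Brownian motion into a reflected Brownian motion whose reflection vectors and drift are the images of the originals (this is the standard fact underlying the reduction described in the appendix of \cite{FrRa-19}, which I may invoke). The reflection on the vertical axis of $\widetilde Z$ has direction $(1,1/2)$; because $\widetilde r_2=1/2$ is exactly the covariance-normalized value, the image reflection on the corresponding edge of the $\beta/2$-cone should be orthogonal to that edge, giving the reflection angle $\epsilon=\pi/2$ — indeed, one checks that $T$ is built so that the reflection direction $(1,\widetilde r_2)$ with $\widetilde r_2=\widetilde\rho/\widetilde\sigma_2$ becomes normal to the image of the vertical axis, and here $\widetilde\rho/\widetilde\sigma_2=(\sigma-\rho)/\sigma$, whereas the vector is $(1,1/2)$, so this identification requires a short computation that I expect to work out. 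For the other edge (image of the horizontal axis), the reflection vector $(1-r,1)$ is sent to a vector making angle $\delta$ with the edge, and the bookkeeping should reproduce formula~\eqref{eq:deltadef}, $\tan\delta=\sin\beta/(r+\cos\beta)$; here one uses that the very same $T$ already transforms the reflection vector $(r,1)$ of the original process $Z$ into a reflection at angle $\delta$ in the $\beta$-cone, so consistency of the two computations can be checked against that statement.

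The drift $(0,\mu)$ of $\widetilde Z$ is sent by $T$ to $T(0,\mu)^{\mathsf T}$, which is a specific vector in the $\beta/2$-cone; I would record it but note that the statement only asserts the geometry of the cone and the two reflection angles, so the precise drift image is not needed for the proposition as phrased — although it is what feeds into the explicit density formulas of Section~\ref{sec:symmetric_case} afterwards. Assembling these pieces: $T$ maps $\widetilde Z$ (RBM in $\mathbb R_+^2$) to an RBM in a cone of opening $\beta/2\in(\pi/2,\pi)$, with identity covariance, one reflection orthogonal to an edge ($\epsilon=\pi/2$) and one reflection at angle $\delta$ given by~\eqref{eq:deltadef}, which is exactly the claim. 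The main obstacle I anticipate is purely computational: correctly propagating the two changes of variable $\widehat Z\mapsto\widetilde Z$ and $\widetilde Z\mapsto T\widetilde Z$ through the reflection vectors and checking that the off-diagonal structure of $\widetilde\Sigma$ conspires with $\cos\beta=-\rho/\sigma$ to yield the half-angle $\beta/2$ and the orthogonality $\epsilon=\pi/2$ — there is no conceptual difficulty once one trusts the linear-transformation lemma from \cite{FrRa-19}, but the trigonometric simplifications (half-angle formulas, expressing everything through $\beta$) need to be done carefully to land on~\eqref{eq:deltadef} exactly.
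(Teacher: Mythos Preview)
Your approach is essentially the paper's: compute the opening angle of the normalized cone from the correlation $\widetilde\rho/\sqrt{\widetilde\sigma_1\widetilde\sigma_2}$ of $\widetilde\Sigma$, identify it with $\beta/2$ via the half-angle identity $\cos^2(\beta/2)=(1+\cos\beta)/2$, and then plug the data $(\widetilde\sigma_1,\widetilde\sigma_2,\widetilde\rho,\widetilde r_1,\widetilde r_2)$ into the reflection-angle formulas from the appendix of \cite{FrRa-19} to obtain $\epsilon=\pi/2$ and $\tan\delta=\sin\beta/(r+\cos\beta)$. The only slip is your orthogonality heuristic: the condition that the reflection $(1,\widetilde r_2)$ on the vertical axis becomes normal after applying $T$ is $\widetilde r_2=\widetilde\rho/\widetilde\sigma_1$ (not $\widetilde\rho/\widetilde\sigma_2$), and indeed $\widetilde\rho/\widetilde\sigma_1=(\sigma-\rho)/(2(\sigma-\rho))=1/2=\widetilde r_2$, so the identification you were worried about does go through once the index is corrected.
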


\begin{proof}
The Brownian motion $\widetilde{Z}$ has the covariance matrix
\begin{equation*}
\left(
\begin{array}{cc}
\widetilde{\sigma}_1 &  \widetilde{\rho}   \\
 \widetilde{\rho}& \widetilde{\sigma}_2 
\end{array}
\right)
=
\left(
\begin{array}{cc}
2(\sigma -\rho) & (\sigma-\rho)   \\
 (\sigma-\rho)  & \sigma 
\end{array}
\right),
\end{equation*}
see Proposition~\eqref{prop:Ztilde}.  Let
\begin{equation*}
   \widetilde{\beta} 
   = \arccos \left( - \frac{\widetilde{\rho}}{ \sqrt{\widetilde{\sigma}_1 \widetilde{\sigma}_2}} \right)= \arccos \left(-\sqrt{\frac{1}{2}\left(1-\frac{\rho}{\sigma}\right)} \right)
\end{equation*}
the angle associated to the new kernel $U$. In particular, 
$\widetilde\beta\in(\frac{\pi}{2},\pi)$, and we have
\begin{equation*}
   \cos^2 \widetilde{\beta} = \frac{1+\cos \beta}{2},
\end{equation*}
whence
\begin{equation*}
 \cos \beta= \cos 2\widetilde{\beta} \quad \mathrm{and} \quad 
\beta =2\widetilde{\beta},
\end{equation*}
see also \cite[Lem.~10]{Tr-19} and \cite{Mu-19}.
Then the new reflection matrix is equal to
\begin{equation*}
\left(
\begin{array}{cc}
1 & \widetilde{r}_2 \\
\widetilde{r}_1 & 1
\end{array}
\right)
\egaldef
\left(
\begin{array}{cc}
1 & 1-r \\
1/2 & 1
\end{array}
\right).
\end{equation*}

Performing the same change of variables as in the appendix of \cite{FrRa-19}, this equation amounts to studying a Brownian motion in a wedge of angle $\widetilde{\beta}$, identity covariance matrix and reflection angles
\begin{equation*}
   \tan {\epsilon}= \frac{\sin \widetilde{\beta}}{ \widetilde{r}_1 \sqrt{\widetilde{\sigma}_1/\widetilde{\sigma}_2} +\cos \widetilde{\beta}}
\quad \text{and} \quad 
\tan {\delta}= \frac{\sin \widetilde{\beta}}{ \widetilde{r}_2 \sqrt{\widetilde{\sigma}_2/\widetilde{\sigma}_1} 
+\cos\widetilde{\beta}}.
\end{equation*}
Then we get
\begin{equation}
\label{eq:values_angle_hat}
   \tan {\epsilon}= \infty, \text{ i.e., }\epsilon=\pi/2
\quad \text{and} \quad
\tan {\delta}= \frac{2\cos \widetilde{\beta}  \sin \widetilde{\beta} }{r-1 +2\cos^2 \widetilde{\beta} } =  \frac{\sin \beta}{r+\cos \beta}.\qedhere
\end{equation}
\end{proof}

\subsection{Algebraic nature of the Laplace transform}
For reflected Brownian motion in a quadrant, the work \cite{BoElFrHaRa-21} proposes an exhaustive classification of the parameters (drift, opening of the cone and reflection angles), allowing to decide which of the following classes of functions the associated Laplace transform $\widetilde L_1(p,q)$ belongs to:
\begin{enumerate}[label={\rm (C\arabic{*})},ref={\rm (C\arabic{*})}]
   \item\label{it:class1}Rational
   \item\label{it:class2}Algebraic
   \item\label{it:class3}D-finite (D for Differentially) (by this, we mean that the Laplace transform satisfies two linear differential equations with coefficients in $\mathbb R(p,q)$, one in $p$ and one in~$q$)
   \item\label{it:class4}D-algebraic (that is, when it satisfies a polynomial differential equation in $p$, and another in $q$)
   \item\label{it:class5}D-transcendental (when it is non-D-algebraic)
\end{enumerate}
Notice that the classes \ref{it:class1} to \ref{it:class4} define a hierarchy, in the sense that
\begin{equation*}
   \ref{it:class1} \subset \ref{it:class2} \subset \ref{it:class3} \subset \ref{it:class4}.
\end{equation*}
A more probabilistic description of the models having a Laplace transform in the class \ref{it:class1} above is as follows:
\begin{itemize}
   \item The skew symmetric condition: $\epsilon+\delta=\pi$, which is a necessary and sufficient condition for the stationary distribution to be exponential, see \cite{HaWi-87b}. 
   \item The Dieker and Moriarty \cite{DiMo-09} criterion: $\epsilon+\delta-\pi \in -\beta \mathbb{N}$, which is a necessary and sufficient condition for the stationary distribution to be a sum of exponential terms.
\end{itemize}

Accordingly, we may transfer the classification of \cite{BoElFrHaRa-21} to our symmetric Brownian motion in a three-quarter plane, via its projection in the domain $S_1$ and its quadrant description $\widetilde{Z}$. Then the following proposion holds.

\begin{prop} \label{prop:application_class}
The Laplace transform of the reflected Brownian motion in the quarter plane $\widetilde{Z}$ is never rational (class \ref{it:class1}). However, there exist values of parameters such that $\widetilde{L}$ is D-algebraic, D-finite or algebraic.
\end{prop}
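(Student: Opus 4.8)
The plan is to transfer the classification of \cite{BoElFrHaRa-21} for reflected Brownian motion in a quadrant to our symmetric three-quarter plane model via Propositions~\ref{prop:Ztilde} and the identification of the $\beta/2$-cone picture in Section~\ref{sec:beta}. Concretely, I would first record that, by the last Proposition of Section~\ref{sec:beta}, the process $T\widetilde Z$ is a reflected Brownian motion in a cone of opening $\widetilde\beta=\beta/2\in(\pi/2,\pi)$ with identity covariance, one reflection angle $\epsilon=\pi/2$, and the other reflection angle $\delta\in(0,\pi)$ given by $\tan\delta=\sin\beta/(r+\cos\beta)$ from \eqref{eq:deltadef}. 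Since $\widetilde L_1(p,q)$ and the Laplace transform of $T\widetilde Z$ differ only by the invertible linear change of variables $T$, they lie in the same class among \ref{it:class1}--\ref{it:class5} (all these classes are stable under affine substitutions in $(p,q)$), so it suffices to decide the class of $T\widetilde Z$ using \cite{BoElFrHaRa-21}.

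The non-rationality claim is the key point. By the probabilistic description recalled just above the statement, $\widetilde L_1$ is rational precisely when the Dieker--Moriarty condition $\epsilon+\delta-\pi\in-\widetilde\beta\,\mathbb N$ holds (the skew-symmetric case $\epsilon+\delta=\pi$ being the $n=0$ instance). With $\epsilon=\pi/2$ this becomes $\delta-\pi/2\in-\widetilde\beta\,\mathbb N$, i.e.\ $\delta=\pi/2-k\widetilde\beta$ for some integer $k\geq0$. I would rule this out: since $\widetilde\beta\in(\pi/2,\pi)$, for $k=0$ we would need $\delta=\pi/2$, which by \eqref{eq:deltadef} forces $\cos\widetilde\beta$ to satisfy $r-1+2\cos^2\widetilde\beta=0$ — but $\widetilde\beta\in(\pi/2,\pi)$ gives $\cos\widetilde\beta\in(-1,0)$, and one checks this cannot occur for the admissible recurrence parameters (recall $r>0$ from \eqref{eq:CNS2_stationary}, and here $\mu_1=\mu_2$ forces $r\in(0,1)$); for $k\geq1$ we would need $\pi/2-k\widetilde\beta>0$, i.e.\ $k\widetilde\beta<\pi/2$, impossible since $\widetilde\beta>\pi/2$. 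Hence no value of the (recurrent, symmetric) parameters makes $\widetilde L_1$ rational. For the positive part of the statement, I would exhibit explicit parameter choices: the skew-symmetric-type relations and the Dieker--Moriarty-type relations for the pair $(\widetilde\beta,\delta)$ translate, through \eqref{eq:deltadef} and the relation $\beta=2\widetilde\beta$, into algebraic constraints on $(\sigma,\rho,r,\mu)$ that have solutions in the admissible region; picking $\delta$ with $\delta+\epsilon-\pi=\delta-\pi/2$ a negative rational multiple of $\pi$ but not of the special arithmetic form yields D-finite (resp.\ algebraic) cases per \cite{BoElFrHaRa-21}, and a generic choice of $\delta$ (an irrational multiple of $\pi$) yields a D-algebraic but non-D-finite case.

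The main obstacle is bookkeeping the translation between the two parametrizations and checking that the exclusion argument is tight. Specifically, one must be careful that \cite{BoElFrHaRa-21} classifies in terms of the triple $(\beta_{\text{cone}},\epsilon,\delta)$ with identity covariance, and verify that after the linear map $T$ of \cite{FrRa-19} the resulting cone angle is genuinely $\widetilde\beta=\beta/2$ and not some reflected variant, and that the drift being purely vertical (as emphasized before Proposition~\ref{prop:Ztilde}) does not take us outside the scope of the classification — indeed \cite{BoElFrHaRa-21} allows arbitrary drift directions, including the degenerate one, so this is a matter of citing the right statement rather than a real gap. A secondary subtlety is confirming that the constraint $r\in(0,1)$ in the symmetric recurrent regime (which follows from combining $\mu_1=\mu_2<0$ with $\mu_1-r\mu_2>0$) is exactly what is needed to forbid $\delta=\pi/2$; if instead one finds $\delta=\pi/2$ is attainable on the boundary of the parameter region, the non-rationality would need the sharper observation that the Dieker--Moriarty family for fixed $k\geq1$ is empty here, which the angle inequality $\widetilde\beta>\pi/2$ already gives unconditionally.
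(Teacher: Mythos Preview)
There is a sign error at the heart of your argument that derails both halves of the proof. You claim that combining $\mu_1=\mu_2<0$ with $\mu_1-r\mu_2>0$ gives $r\in(0,1)$. In fact, $\mu(1-r)>0$ with $\mu<0$ forces $1-r<0$, i.e.\ $r>1$. The paper uses exactly this: since $r>1$, the denominator $r-1+2\cos^2\widetilde\beta$ in \eqref{eq:values_angle_hat} is positive while the numerator $2\cos\widetilde\beta\sin\widetilde\beta=\sin\beta$ is negative (as $\widetilde\beta\in(\pi/2,\pi)$), so $\tan\delta<0$ and hence $\delta>\pi/2$. Then $\epsilon+\delta-\pi=\delta-\pi/2>0$, which is not in $-\widetilde\beta\,\mathbb N$, disposing of the entire Dieker--Moriarty family (including skew symmetry) in one stroke. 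Your $k\geq1$ angle inequality is fine, but your $k=0$ case rests on the false premise $r<1$; with that premise the equation $r-1+2\cos^2\widetilde\beta=0$ would actually be solvable, so your ``one checks this cannot occur'' is not only unjustified but wrong under your own hypothesis.

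The sign error also contaminates the positive part. You propose choosing $\delta$ so that $\delta-\pi/2$ is a \emph{negative} rational multiple of $\pi$ to land in the D-finite or algebraic classes; but $\delta-\pi/2>0$ always holds in the admissible regime, so this is impossible. The paper instead invokes the D-algebraicity criterion $\pi/2+\delta\in\widetilde\beta\,\mathbb Z+\pi\mathbb Z$ from \cite{BoElFrHaRa-21}, which does admit solutions with $\delta>\pi/2$; the algebraic example is the explicit one in \eqref{eq:expression_remarkable_density}. Once you correct the inequality to $r>1$, your overall strategy (transfer the classification via the $\beta/2$-cone picture) matches the paper's, and the non-rationality proof becomes shorter than what you wrote: no case split on $k$ is needed.
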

Before proving Proposition \ref{prop:application_class}, let us do some remarks:
\begin{itemize}
   \item As a consequence, there is no skew symmetry in the three-quarter plane (nor Dieker and Moriarty condition). From this point of view, Brownian motion in non-convex cones is deeply different from Brownian motion in convex cones.
   \item The above feature (absence of skew symmetry) admits a clear interpretation in terms of the growth of exponential functions in $\mathbb R^2$. Indeed, for $(a,b)\neq (0,0)$, an exponential function
   \begin{equation}
   \label{eq:formula_exp}
      (p,q)\mapsto \exp(-ap-bq)
   \end{equation}
   tends to infinity in half of the directions of $\mathbb R^2$, so such an exponential function (and any finite linear combination of exponential functions as well) will never be integrable on a non-convex domain. As a direct consequence, it cannot represent any stationary distribution.
   \item The example presented in Figure~\ref{fig:Franceschi_condition2} (see \eqref{eq:expression_remarkable_density}) has an algebraic Laplace transform, as computed in \cite{BoElFrHaRa-21}. It appears as the simplest example which one may construct in a non-convex wedge.
\end{itemize}

\begin{proof}[Proof of Proposition \ref{prop:application_class}]
The skew symmetric condition is
\begin{equation*}
2 \widetilde{\rho}  = \widetilde{r}_1  \widetilde{\sigma}_1 + \widetilde{r}_2  \widetilde{\sigma}_2,
\end{equation*}
or
\begin{equation*}
\sigma -\rho = (1/2)(\sigma - \rho) +(1-r) \sigma /2,
\end{equation*}
which yields
$
r=\rho /\sigma  <1
$.
Hence, as the recurrence conditions imply $r>1$, we can conclude that the skew symmetric case is not possible.
More generally the Dieker and Moriarty condition 
\begin{equation*}
\epsilon+\delta -\pi \in -\mathbb{N} \widetilde{\beta}
\end{equation*}
cannot hold, because $\epsilon+\delta -\pi= \delta -\pi/2 >0$. 
However, there exist some parameters such that
\begin{equation*}
   \pi/2+ {\delta} \in \widetilde{\beta} \mathbb{Z} + \pi \mathbb{Z} ,
\end{equation*}
which is exactly condition \cite{BoElFrHaRa-21} to admit a D-algebraic Laplace transform. 
\end{proof}

\subsection{Line of steepest descent of $\pi$}

In the symmetric case, we remarked that the Laplace transform of the normal derivative of $\pi$ along the diagonal is zero and then $n(x,y)=0$, see \eqref{eq:formula_Laplace_transform_n}. Thus we may formulate the following question, in the non-symmetric case: does there also exist a line (not necessarily the diagonal) along which the normal derivative of $\pi$ is zero?

Let us consider the steepest descent line of $\pi$ starting from $(0,0)$. In other words, we consider that $\pi$ is a potential and we are looking to the field line of $\text{grad}\;\pi$ passing through $(0,0)$. This defines the curve
\begin{equation*}
\mathcal{C}=\{ (z_1(t),z_2(t)): t\in\mathbb{R}_+ \},
\end{equation*}
where $(z_1(0),z_2(0))=(0,0)$ and
\begin{equation*}
\begin{cases}
\DD z_1'(t) = \frac{\partial \pi}{\partial z_1} (z_1(t),z_2(t)),
\\[0.3cm]
\DD z_2'(t) = \frac{\partial \pi}{\partial z_2} (z_1(t),z_2(t)).
\end{cases}
\end{equation*}
If we divide the three-quarter plane along this line, we obtain a functional equation with only two unknown functions. We focus on a few examples where the curve $\mathcal{C}$ is a simple half-line: 
\begin{itemize}
\item In the symmetric case studied in Section~\ref{sec:symmetric_case}, the curve $\mathcal{C}$ is simply the first diagonal.
\item In the special case of Figure~\ref{fig:Franceschi_condition2}, the curve $\mathcal{C}$ is the half-line starting from the origin and following the direction of the drift. 
\item In the quadrant, when the skew symmetric condition is satisfied, the stationary distribution has an exponential density of the form \eqref{eq:formula_exp} (up to a normalization constant), and the curve $\mathcal{C}$ is also a half-line of direction $-(a,b)$. 
\end{itemize}

\appendix

\section{Proof of Proposition \ref{prop:main_func_eq}}
\label{app:p}

\begin{proof}
 Let us introduce the three following sets
\begin{align*}
   S_1^\epsilon &\egaldef \{(z_1,z_2) : z_2>z_1 + \epsilon/\sqrt{2} \text{ and } z_2 \geq 0 \} , \\
   S_2^\epsilon&\egaldef\{ (z_1,z_2) : z_1>z_2 + \epsilon/\sqrt{2} \text{ and } z_1 \geq 0 \}
\end{align*}
and 
$
   \widetilde{S}^\epsilon \egaldef S \setminus (S_1^\epsilon \cup S_2^\epsilon).
$
Then, we define the function $I_\epsilon$ such that
\begin{equation}
\label{eq:definition_function_I}
   I_\epsilon(z_1,z_2) \egaldef
\begin{cases}
1 & \text{if } z\in S_1^\epsilon,
\\ 
\frac{z_2-z_1}{\sqrt{2}\epsilon} + \frac{1}{2} & \text{if } z\in \widetilde{S}^\epsilon,
\\
0 & \text{if } z\in S_2^\epsilon.
\end{cases}
\end{equation}
From now on, we will often omit to note the variables $(z_1,z_2)$. 
We have
\begin{equation*}
\nabla I_\epsilon =
\left(
\frac{\partial I_\epsilon}{\partial z_1}   , \frac{\partial I_\epsilon}{\partial z_2}  
 \right)
 =
\begin{cases}
(0,0) & \text{if } z\in S_1^\epsilon\cup S_2^\epsilon,
\\ 
\left(\frac{-1}{\sqrt{2}\epsilon} , \frac{1}{\sqrt{2}\epsilon} \right) & \text{if } z\in \widetilde{S}^\epsilon,
\end{cases}
\end{equation*}
and, for all $z\in S$,
\begin{equation}
\frac{\partial^2 I_\epsilon}{\partial z_1^2}  =
\frac{\partial^2 I_\epsilon}{\partial z_2^2}  =
-\frac{\partial^2 I_\epsilon}{\partial z_1\partial z_2} 
=
 \frac{1}{\sqrt{2} \epsilon} \bigl(\delta_{\epsilon/\sqrt{2}} (z_1-z_2) -\delta_{-\epsilon/\sqrt{2}} (z_1-z_2) \bigr),
 \label{eq:I''}
\end{equation}
where $\delta_a$ is the Dirac distribution at $a$. For the sake of brevity, we write
\begin{equation*}
I'_\epsilon \egaldef \frac{\partial I_\epsilon}{\partial z_2} = -\frac{\partial I_\epsilon}{\partial z_1}\qquad
\text{and}\qquad 
I''_\epsilon\egaldef \frac{\partial^2 I_\epsilon}{\partial z_1^2}  =
\frac{\partial^2 I_\epsilon}{\partial z_2^2}  = -\frac{\partial^2 I_\epsilon}{\partial z_1\partial z_2} .
\end{equation*}
Let us take 
$
f_\epsilon  \egaldef e^{x z_1+ y z_2} I_{\epsilon}.
$
Its first and second derivatives are equal to
\begin{align*}
   \frac{\partial f_\epsilon}{\partial z_1} &= \left(x I_\epsilon+\frac{\partial I_{\epsilon}}{\partial z_1}  \right)e^{x z_1+ y z_2},\\
   \frac{\partial f_\epsilon}{\partial z_2} &=\left(y I_\epsilon+\frac{\partial I_{\epsilon}}{\partial z_2}  \right)e^{x z_1+ y z_2},\\
   \frac{\partial^2 f_\epsilon}{\partial z_1^2} &=\left( x^2 I_{\epsilon}+2x \frac{\partial I_{\epsilon}}{\partial z_1} +\frac{\partial^2 I_{\epsilon}}{\partial z_1^2}  \right)e^{x z_1+ y z_2},\\
   \frac{\partial^2 f_\epsilon}{\partial z_2^2} &=\left( y^2 I_{\epsilon}+2y \frac{\partial I_{\epsilon}}{\partial z_2} +\frac{\partial^2 I_{\epsilon}}{\partial z_2^2}  \right)e^{x z_1+ y z_2},\\
   \frac{\partial f_\epsilon}{\partial z_1 \partial z_2} &=\left(xy I_{\epsilon}+x \frac{\partial I_{\epsilon}}{\partial z_2}+y \frac{\partial I_{\epsilon}}{\partial z_1}  + \frac{\partial^2 I_{\epsilon}}{\partial z_1\partial z_2}  \right)e^{x z_1+ y z_2} .
\end{align*}
Therefore, the generator at $f_\epsilon$ is given by
\begin{equation*}
\mathcal{G} f_\epsilon=
 \left( 
 K(x,y)I_{\epsilon} +\Bigl( \frac{\partial K}{\partial y}-\frac{\partial K}{\partial x}  \Bigr)   I'_{\epsilon}
 +
  \frac{1}{2}
\Bigl( 
 \frac{\partial^2 K}{\partial x^2} 
 +
 \frac{\partial^2 K}{\partial y^2} 
 -2 \frac{\partial^2 K}{\partial x\partial y} 
 \Bigr) I''_\epsilon 
 \right) e^{x z_1+ y z_2},
\end{equation*}
that is,
\begin{multline*}
\mathcal{G} f_\epsilon =\\
 \left( 
 K(x,y)I_{\epsilon} +\bigl( \sigma_2y - \sigma_1x -\rho(y-x) +\mu_2 -\mu_1 \bigr)   I'_{\epsilon} 
 +  \frac{1}{2} \left(\sigma_1+\sigma_2-2\rho\right) I''_\epsilon\right) e^{x z_1+ y z_2}.
\end{multline*}
We also have
\begin{align*}
R_1 \cdot \nabla f_\epsilon(z_1,0)&=
\bigl((r_1 x +y) I_{\epsilon} +(1-r_1) I_{\epsilon}' \bigr)
e^{x z_1},
\\
R_2 \cdot \nabla f_\epsilon(0,z_2)&=
\bigl(( x +r_2y) I_{\epsilon} +(r_2 -1) I_{\epsilon}' \bigr)
e^{y z_2}
.
\end{align*}
Now we apply the basic adjoint relationship of Proposition~\ref{prop:BAR} to $f_\epsilon$ (which can be written as the difference of two convex functions and therefore satisfies the hypotheses of  Proposition~\ref{prop:BAR}). Since all integrals converge, as $f_\epsilon$ and its derivatives are bounded in $S$ for all $(x,y)$ in $\{\Re{(x)}\geq0,\,\Re{(x+y)}\leq0\}$, we obtain
\begin{align}
\notag 0=K(x,y) &\int_S  I_{\epsilon}(z_1,z_2) e^{x z_1+ y z_2}  \pi(z_1,z_2)  \mathrm{d}z_1 \mathrm{d}z_2 
\\ \notag &+ 
\bigl( \sigma_2 y - \sigma_1 x - \rho (y-x) +\mu_2 -\mu_1 \bigr)
\int_S  I_{\epsilon}'(z_1,z_2)  e^{x z_1+ y z_2} \pi(z_1,z_2)  \mathrm{d}z_1 \mathrm{d}z_2 
\\ \notag
&+ \frac{1}{2}
\left( 
  \sigma_1 + \sigma_2 - 2\rho
 \right)
\int_S I_{\epsilon}''(z_1,z_2)  e^{x z_1+ y z_2} \pi(z_1,z_2)  \mathrm{d}z_1 \mathrm{d}z_2 
\\ \notag
&+ (r_1 x +y)\int_{-\infty}^0 
 I_{\epsilon}(z_1,0) e^{x z_1}\nu_1 (z_1) \mathrm{d}z_1 + 
 (1-r_1)\int_{-\infty}^0  I_{\epsilon}'(z_1,0)e^{x z_1} 
 \nu_1 (z_1) \mathrm{d}z_1
 \\
&+ (x + r_2y)\int_{-\infty}^0 
 I_{\epsilon}(0,z_2)e^{y z_1}\nu_2 (z_2) \mathrm{d}z_2 + 
 (r_2 -1)\int_{-\infty}^0  I_{\epsilon}'(0,z_2)e^{y z_2} 
 \nu_1 (z_2) \mathrm{d}z_2.
 \label{eq:barI}
\end{align}
Since $\underset{\epsilon\to 0}{\lim} I_{\epsilon} = \mathrm{1}_{S_1}$, the dominated convergence theorem implies that:
\begin{align*}
\underset{\epsilon\to 0}{\lim}\int_S I_{\epsilon}(z_1,z_2) e^{x z_1+ y z_2} \pi(z_1,z_2) \mathrm{d}z_1 \mathrm{d}z_2
&= \int_{S_1} e^{x z_1+ y z_2} \pi(z_1,z_2) \mathrm{d}z_1 \mathrm{d}z_2 = L(x,y),\\
\underset{\epsilon\to 0}{\lim}\int_{-\infty}^0 
 I_{\epsilon}(z_1,0)e^{x z_1}\nu_1 (z_1) \mathrm{d}z_1 
 &= \int_{-\infty}^0 e^{x z_1}
\nu_1 (z_1) \mathrm{d}z_1 =\ell_1(x),\\
\underset{\epsilon\to 0}{\lim}\int_{-\infty}^0 
 I_{\epsilon}(0,z_2)e^{y z_2}\nu_2 (z_2) \mathrm{d}z_2 
 &= 0.
\end{align*}
We also have $\underset{\epsilon\to 0}{\lim} I'_{\epsilon}(z_1,z_2) =  \delta_0 (z_2-z_1)$, then by continuity of $\pi$, $\nu_1$ and $\nu_2$, we obtain the limits:
\begin{align*}
\underset{\epsilon\to 0}{\lim}\int_S I'_{\epsilon}(z_1,z_2) e^{x z_1+ y z_2} \pi(z_1,z_2) \mathrm{d}z_1 \mathrm{d}z_2
&= \int_0^\infty e^{(x+y) z } \pi(z,z) \mathrm{d}z = m(x+y),\\
\underset{\epsilon\to 0}{\lim}\int_{-\infty}^0 
 I'_{\epsilon}(z_1,0)e^{x z_1}\nu_1 (z_1) \mathrm{d}z_1 
 &= \nu_1(0),\\
\underset{\epsilon\to 0}{\lim}\int_{-\infty}^0 
 I'_{\epsilon}(0,z_2)e^{y z_2}\nu_2 (z_2) \mathrm{d}z_2 
 &= \nu_2(0).
\end{align*}
Our next goal is to show that
\begin{align*}
\underset{\epsilon\to 0}{\lim}\int_S I''_{\epsilon}(z_1,z_2) e^{x z_1+ y z_2} \pi(z_1,z_2) \mathrm{d}z_1 \mathrm{d}z_2 = \frac{1}{2} n(x+y) + \frac{1}{2} (x-y) m(x+y) .
\end{align*}
To this end, we introduce the linear change of variables
\begin{equation*}
   (z_1,z_2)\egaldef \phi(u,v)=\left(\frac{u-v}{\sqrt{2}},\frac{u+v}{\sqrt{2}}\right),
\quad
(u,v)=\phi^{-1}(z_1,z_2)=
\left(\frac{z_1+z_2}{\sqrt{2}},\frac{z_2-z_1}{\sqrt{2}}\right),
\end{equation*} 
where $\det \phi =1$.
Recall that for arbitrary constants $a$ and $c$, we have $\delta_a (c\times  \cdot)=\frac{1}{|c|} \delta_{a/c} (\cdot)$. So we deduce from \eqref{eq:I''} the equality $I''_{\epsilon}\bigl(\phi(u,v)\bigr)=\frac{1}{2}\bigl(\delta_{-\frac{\epsilon}{2}} - \delta_{\frac{\epsilon}{2}} \bigr)(v)$.
Let us define 
\begin{equation*}
   g(u,v)\egaldef e^{x \frac{u-v}{\sqrt{2}}+ y \frac{u+v}{\sqrt{2}} } \pi\left(\frac{u-v}{\sqrt{2}},\frac{u+v}{\sqrt{2}}\right).
\end{equation*}
We have
\begin{align*}
\int_S I''_{\epsilon}(z_1,z_2) e^{x z_1+ y z_2} \pi(z_1,z_2) \mathrm{d}z_1  \mathrm{d}z_2
&= 
\frac{1}{2\epsilon} \int_{\phi^{-1}(S)}
 \left(\delta_{-\frac{\epsilon}{2}} - \delta_{\frac{\epsilon}{2}} \right)(v)\  g(u,v)  \mathrm{d}u  \mathrm{d}v
\\ &=
\frac{1}{2\epsilon} \int_{-\epsilon/2}^\infty
 \left( g\Bigl(u,-\frac{\epsilon}{2}\Bigr)
 - g\Bigl(u,\frac{\epsilon}{2}\Bigr) \right) \mathrm{d}u
\\ & \underset{\epsilon\to 0}{\longrightarrow} \frac{-1}{2}\int_0^\infty \frac{\partial g}{\partial v} (u,0) \mathrm{d}u
\\ &= \frac{1}{2} \int_0^\infty e^{(x+y) z}
 \left( \frac{\partial \pi}{\partial z_1} -\frac{\partial \pi}{\partial z_2}
 \right)(z,z)
 \mathrm{d}z 
 \\&\quad + \frac{1}{2} (x-y)\int_0^\infty e^{(x+y) z} \pi(z,z)\mathrm{d}z.
 \end{align*}
Finally, letting $\epsilon\to 0$ in \eqref{eq:barI} concludes the proof.
\end{proof}

Notice that in the proof of Proposition \ref{prop:main_func_eq}, the particular expression \eqref{eq:definition_function_I} is not at all crucial: any similar function with the desired properties would have been suitable.

\bibliographystyle{abbrv}
\bibliography{biblio_FFR}

\end{document}